\newcommand{\be}{\begin{eqnarray}}
\newcommand{\ee}{\end{eqnarray}}
\newcommand{\beq}{\begin{equation}}
\newcommand{\eeq}{\end{equation}}
\newcommand{\beqn}{\begin{equation*}}
\newcommand{\eeqn}{\end{equation*}}
\newcommand{\average}[1]{\langle#1\rangle}
\newcommand{\slot}{\,\cdot\,}
\newcommand{\round}[1]{\lfloor#1\rfloor}
\newtheorem{thm}{Theorem}
\newtheorem{lem}[thm]{Lemma}
\newtheorem{remark}[thm]{Remark}
\newcommand\cE{{\mathcal E}}
\newcommand\cL{{\mathcal L}}
\newcommand\cP{{\mathcal P}}
\newcommand\bE{{\mathbb E}}
\newcommand\bP{{\mathbb P}}
\newcommand\bR{{\mathbb R}}
\newcommand\bZ{{\mathbb Z}}
\newcommand\rd{{\mathrm d}}
\newcommand\fa{{\mathfrak a}}
\newcommand{\ve}{\varepsilon}
\begin{document}

\title[A local limit theorem for random walks in balanced environments]{A local limit theorem for random walks in balanced environments}

\author[Mikko Stenlund]{Mikko Stenlund}
\address[Mikko Stenlund]{
Department of Mathematics, University of Rome ``Tor Vergata''\\
Via della Ricerca Scientifica, I-00133 Roma, Italy; Department of Mathematics and Statistics, P.O.\ Box 68, Fin-00014 University of Helsinki, Finland.}
\email{mikko.stenlund@helsinki.fi}
\urladdr{http://www.math.helsinki.fi/mathphys/mikko.html}

\keywords{Balanced random environment, local limit theorem, Nash inequality}
\subjclass[2000]{60K37; 60F15, 82C41, 82D30, 35K15}


\begin{abstract}
Central limit theorems for random walks in quenched random environments have attracted plenty of attention in the past years. More recently still, finer local limit theorems --- yielding a Gaussian density multiplied by a highly oscillatory modulating factor --- for such models have been obtained. In the one-dimensional nearest-neighbor case with i.i.d.\ transition probabilities, local limits of uniformly elliptic ballistic walks are now well understood. We complete the picture by proving a similar result for the only recurrent case, namely the balanced one, in which such a walk is diffusive. The method of proof is, out of necessity, entirely different from the ballistic case.
\end{abstract}

\maketitle


\subsection*{Acknowledgements}
The author is most grateful to Carlangelo Liverani, Gerhard Keller, Stefano Olla and Raghu Varadhan for their interest in the problem. He thanks Universit\`a di Roma ``Tor Vergata'' for its hospitality and the Academy of Finland for funding.

\medskip
\begin{center}
{\it To Esko Valkeila (1951--2012)}
\end{center}

\section{Introduction}
We consider in this paper nearest-neighbor random walks in fixed environments on~$\bZ$. An environment consists of a family of triplets $(q_k,r_k,p_k)$ of non-negative numbers satisfying $q_k+r_k+p_k=1$, one assigned to each site $k$ of the integer lattice~$\bZ$. A discrete-time random walk in the given environment is obtained as the Markov chain $(X_n)_{n\geq 0}$ starting at zero ($X_0=0$) with the time-homogeneous transition probabilities
\beqn
P_{k,j} = \bP(X_{n+1} = j \,|\,X_n = k) = 
\begin{cases}
q_k & \text{if $ j = k-1$},
\\
r_k & \text{if $j = k$},
\\
p_k &\text{if $j = k+1$},
\end{cases}
\eeqn
and $P_{k,j}=0$ if $|k-j|>1$. 

If the triplets $(q_k,r_k,p_k)$ are drawn from the same distribution,  independently of each other, it is known that the corresponding random walk satisfies the central limit theorem in two diametrically opposite regimes. In the first ``diffusive regime'' the walk is assumed to satisfy a sufficiently strong ballisticity condition, in particular so that a nonzero asymptotic speed $\lim_{n\to\infty}X_n/n = v\neq 0$ exists. (In a random environment, this is not an automatic consequence of transience, i.e., of $\lim_{n\to\infty}X_n\in\{\pm\infty\}$.) In the other diffusive regime the environment satisfies \emph{exactly} the conditions
\beq\label{eq:balanced}
q_k = p_k = \omega_k \quad \text{and} \quad r_k = 1-2\omega_k
\eeq
for some numbers $\omega_k\in (0,\tfrac12]$, $k\in\bZ$.
Such an environment, or the resulting walk, is called balanced. In particular, a balanced walk is recurrent, and in fact it is a martingale. In addition to these two, there is a regime of ballistic walks which is sub-diffusive in the sense that the central limit theorem \emph{does not hold}. Moreover, the transient but non-ballistic regime, and the recurrent but non-balanced regime are both sub-diffusive. We refer the reader to \cite{Goldsheid_2007,BolthausenGoldsheid_2008,Peterson_2008} for the precise conditions and the complete picture.

Knowing the diffusive regimes above, one is led to ask if it is possible to do better than the central limit theorem and to obtain a more detailed description of the limit behavior of the random walk than what is provided by the weak convergence following a diffusive scaling. More precisely, one wonders what happens to the site-wise probability mass function $k\mapsto \bP(X_n=k\,|\,X_0=0)$ as $n$ becomes large. Results in this direction are known as local limit theorems. 

For the diffusive ballistic case, a rigorous proof of the local limit theorem proceeds via careful analysis of first hitting times of the walk to various sites of the integer lattice: One proves a local limit theorem for the hitting times and, with the aid of ballisticity, transforms the result concerning hitting times into one concerning the walk. This was first done in \cite{LeskelaStenlund_2011} for the ``purely ballistic'' case $q_k = 0$, assuming rather little about $r_k,p_k$. In particular, no i.i.d.\ or uniform ellipticity assumption was placed. By the same strategy, the result was complemented in the preprint~\cite{DolgopyatGoldsheid_2012} to include diffusive ballistic cases for which $q_k\neq 0$ is allowed, in uniformly elliptic ($q_k,r_k,p_k\geq \ve>0$ for all $k\in\bZ$) i.i.d.\ environments. 

For recurrent walks the approach based on first hitting times seems doomed, because the ``maximum process'' and the actual walk have very little in common in the absence of a nonzero asymptotic speed. Consequently, our line of attack is quite different altogether, and of independent interest: We make explicit use of the central limit theorem, and upgrade the associated weak convergence of the random walk to pointwise convergence of a ``reversed'' random walk. To that end, we take advantage of a classical tool in analysis known as a Nash inequality, originally devised for controlling solutions of parabolic partial differential equations. Namely, it turns out that the abovementioned ``reversed'' random walk satisfies a discrete heat equation with random diffusivity. We then prove, with the aid of a Nash inequality, that the solutions of that equation must be rather regular, which in combination with the central limit theorem yields convergence of the (suitably scaled) density to a smooth Gaussian. Interpreted in terms of the original random walk, the result then yields a highly oscillatory modulated Gaussian density (Figure~\ref{fig:Distribution}). It is worth pointing out that this density does not converge by a long shot (even after scaling).

The phenomenology in the present paper is very similar to the one observed for ballistic walks in the sequence of papers~\cite{SimulaStenlund_2009,SimulaStenlund_2010,LeskelaStenlund_2011}. In fact, also recurrent walks, including balanced ones, were studied in~\cite{SimulaStenlund_2010}. Here we confirm some findings of that paper rigorously.

\section{Preliminaries}
\noindent \textit{Notation.}
Given a function $u$ on $\bZ$, 
let $\nabla u$ denote its discrete gradient
\beqn
\nabla u(k) = u(k+1) - u(k)
\eeqn
and $\Delta u$ its discrete Laplacian
\beqn
\Delta u(k) = u(k+1) - 2u(k) + u(k-1)
\eeqn
throughout this note. For any real number $\xi$, we write $\round{\xi}$ for the largest integer $\leq\xi$.

\medskip
Assume that the numbers $\omega_k\in (0,\tfrac12]$, $k\in\bZ$, are given, and consider the discrete-time random walk $(X_n)_{n\geq 0}$ on $\bZ$, starting at zero, with the balanced environment satisfying~\eqref{eq:balanced}.
Next, denote by $\pi$ the measure on $\bZ$ with
\beqn
\pi_k = \frac{1}{\omega_k}.
\eeqn
It is easily checked that $\pi$ is reversible for the Markov transition matrix $P$:
\beqn
\pi_k P_{k,j} = \pi_j P_{j,k}\qquad (j,k)\in \bZ^2.
\eeqn
We are interested in the $k$-dependence of the quantity $(P^{n})_{0,k} =  \bP(X_{n} = k \,|\,X_0 = 0) $ for large values of $n$. In fact, it will be more convenient to study the ``reversed'' quantity
\beqn
a(n,k) = (P^{n})_{k,0} = \frac{\omega_k}{\omega_0} (P^{n})_{0,k}.
\eeqn
This is related to the fact that
\beqn
a(n+1,k)-a(n,k) = ((P-I)P^{n})_{k,0} = \sum_{j\in\bZ} (P-I)_{k,j} a(n,j),
\eeqn
or
\beq\label{eq:heat_discrete}
a(n+1,k)-a(n,k) = \omega_k\Delta a(n,k).
\eeq
In other words, $a$ satisfies a discrete heat equation with variable diffusivity. The initial condition is $a(0,k) = 1_{\{0\}}(k)$ by definition. 

\begin{figure}[!ht]
\begin{centering}
\includegraphics[width=0.7\linewidth]{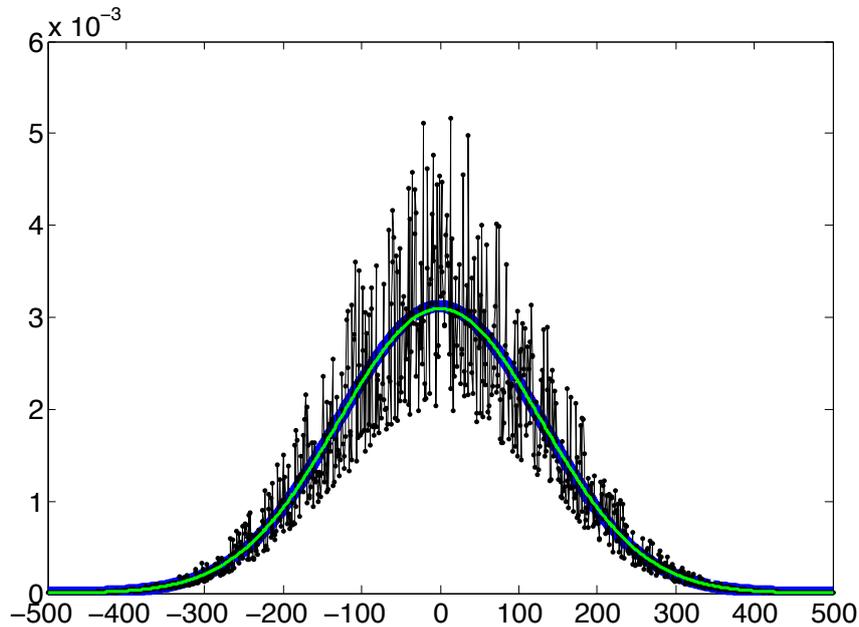}
\caption{The distribution $k\mapsto \bP^0(X_n=k)$ at $n=2^{15}$ in a fixed realization of a balanced random environment is shown in black. The Gaussian density corresponding to the variance of the data is shown in green, whereas the blue curve graphs a solution $k\mapsto \text{const}\cdot a(n,k)$ to the heat equation~\eqref{eq:heat_discrete} with variable diffusivity. The latter two match to a high precision.}
\label{fig:Distribution}
\end{centering}
\end{figure}

It is apparent from Figure~\ref{fig:Distribution} that, as functions of $k$, the ``forward'' and ``reversed'' quantities $(P^n)_{0,k}$ and $(P^n)_{k,0}$ behave very differently at the local level, although their global structures are reminiscent. This observation is at the heart of the present paper.

An intimately related model is the continuous-time random walk $(Y_t)_{t\geq 0}$ on $\bZ$ obtained by Poissonizing the discrete walk with an average of one transition\footnote{The transition could be redundant; when the exponential clock rings so as to allow for a jump, the walker makes a transition from its current location according to the discrete-time transition matrix $P$.} per time unit: Set
\beqn
(\cP^t)_{k,j}=\bP(Y_{s+t} = j\, |\, Y_s = k) = e^{-t}\sum_{n = 0}^\infty \frac{t^n}{n!} (P^n)_{k,j}.
\eeqn
As in the discrete-time case, $\pi$ is reversible for the semigroup $\cP^t$. Moreover, defining
\beq\label{eq:a_def}
\fa(t,k) = (\cP^t)_{k,0},
\eeq
we likewise get the heat equation
\beq\label{eq:heat}
\begin{split}
&\partial_t \fa(t,k)  = \omega_k \Delta \fa(t,k),\quad t\geq 0,
\\
&\fa(0,k) = 1_{\{0\}}(k),
\end{split}
\eeq
with variable diffusivity for $\fa=\fa(t,k)$. We will first concentrate on the continuous-time model, because it makes the analysis technically fluent and the arguments easy to follow. We then discuss in Section~\ref{sec:discrete} how the analysis can be adapted to the discrete-time setting. 

\section{Main results}

Given a number $\sigma>0$, we write
\beqn
\phi_{\sigma^2}(x) = \frac{1}{\sqrt{2\pi}\sigma}e^{-x^2/2\sigma^2}
\eeqn
for the density of the centered normal random variable with variance~$\sigma^2$.

\begin{thm}\label{thm:LLT}
Let $\omega_k\in(0,\tfrac12]$, $k\in\bZ$, be given and assume (A1)--(A3) below:

\medskip
\noindent (A1) There exists a $\sigma>0$ for which
\beq\label{eq:CLT}
\lim_{t\to\infty}\bP^0\bigl(Y_{t}/\sqrt t \leq  x \bigr) = \int_{-\infty}^x \phi_{\sigma^2}(\xi)\,\rd\xi, \quad x\in\bR.
\eeq
That is, the central limit theorem holds for $Y_t/\sqrt t$. (Here $\bP^0$ means $Y_0=0$ is given.)

\medskip
\noindent (A2)
There exists a $\mu>0$ for which
\beq\label{eq:average}
\lim_{T\to\infty} \frac{1}{y-x} \int_{x}^{y} \frac{1}{\omega_{\round{T\xi}}}\,\rd\xi  = \mu,\quad (x,y)\in\bR^2.
\eeq

\medskip 
\noindent (A3) The bound
\beq\label{eq:heat_kernel}
\sup_{t\geq 0} \sup_{k\in\bZ} \sqrt t \,(\cP^t)_{0,k}<\infty
\eeq
holds.

\medskip
\noindent Then the local limit theorem is satisfied in the form that
\beqn
\lim_{t\to\infty}\sup_{x\in I}\left |\omega_{\round{\sqrt t\,x}}\cdot \sqrt{t}\cdot\bP^0\bigl(Y_{t} = \round{\sqrt t\, x}\bigr) -  \frac{1}{\mu}\,\phi_{\sigma^2}(x)\right | = 0
\eeqn
on any compact set $I\subset\bR$.
\end{thm}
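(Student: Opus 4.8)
\medskip
\noindent\textbf{Proof strategy.}
The plan is to transfer everything to the reversed kernel $\fa$ — which solves the heat equation~\eqref{eq:heat} — and then to combine a compactness argument with the central limit theorem~(A1). Reversibility of $\pi$ gives $\omega_k\,\bP^0(Y_t=k)=\omega_k(\cP^t)_{0,k}=\omega_0\,\fa(t,k)$, so introducing the rescaled profiles
\beqn
f_t(x):=\sqrt t\,\fa\bigl(t,\round{\sqrt t\,x}\bigr),\qquad x\in\bR,
\eeqn
one sees that the assertion is \emph{exactly} the claim $\omega_0 f_t\to\tfrac1\mu\phi_{\sigma^2}$, i.e.\ $f_t\to\tfrac1{\omega_0\mu}\phi_{\sigma^2}$, uniformly on compact sets. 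Assumption (A3) together with $\omega_k\le\tfrac12$ gives at once the uniform bound $0\le f_t(x)\le\tfrac1{2\omega_0}\sup_{s,j}\sqrt s\,(\cP^s)_{0,j}<\infty$, so it remains to prove precompactness of $\{f_t\}$ and identify the only possible limit.

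\medskip
\noindent\emph{Regularity (the main step).} First I would show that $\{f_t:t\ge1\}$ is equicontinuous on compacts. Since $\omega_k\Delta$ is self-adjoint on $\ell^2(\pi)$ with Dirichlet form $\cE(u,v)=\sum_k\nabla u(k)\nabla v(k)$, equation~\eqref{eq:heat} yields the dissipation identities
\beqn
\tfrac{\rd}{\rd t}\|\fa(t,\slot)\|_{\ell^2(\pi)}^2=-2\,\cE\bigl(\fa(t,\slot),\fa(t,\slot)\bigr),\qquad \tfrac{\rd}{\rd t}\,\cE\bigl(\fa(t,\slot),\fa(t,\slot)\bigr)=-2\,\bigl\|\omega\Delta\fa(t,\slot)\bigr\|_{\ell^2(\pi)}^2\le0 .
\eeqn
By reversibility and Chapman--Kolmogorov, $\|\fa(t,\slot)\|_{\ell^2(\pi)}^2=\pi_0(\cP^{2t})_{0,0}=O(t^{-1/2})$ by (A3) — this is the on-diagonal bound that a Nash inequality is designed to deliver and is the analytic engine of the argument. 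Because $\cE(\fa(t,\slot),\fa(t,\slot))$ is non-increasing, integrating the first identity over $[t,2t]$ gives $t\cdot\cE(\fa(2t,\slot),\fa(2t,\slot))\le\tfrac12\|\fa(t,\slot)\|_{\ell^2(\pi)}^2=O(t^{-1/2})$, hence $\|\nabla\fa(t,\slot)\|_{\ell^2}=O(t^{-3/4})$. A Cauchy--Schwarz estimate along the discrete path from $\round{\sqrt t\,x'}$ to $\round{\sqrt t\,x}$ then produces $|f_t(x)-f_t(x')|\le C\bigl(|x-x'|+t^{-1/2}\bigr)^{1/2}$, the desired H\"older-$\tfrac12$–type equicontinuity, uniform in $t\ge1$.

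\medskip
\noindent\emph{Compactness and identification of the limit.} Along any sequence $t_j\to\infty$, Arzel\`a--Ascoli extracts a subsequence with $f_{t_j}\to f$ uniformly on compacts, $f\ge0$ continuous and bounded. To pin down $f$, test against an arbitrary $g\in C_c(\bR)$. On one hand $\sum_k g(k/\sqrt t)\,\bP^0(Y_t=k)=\bE^0[g(Y_t/\sqrt t)]\to\int g\,\phi_{\sigma^2}$ by (A1). On the other hand, writing $\bP^0(Y_t=k)=\tfrac{\omega_0}{\omega_k}\tfrac1{\sqrt t}f_t(k/\sqrt t)$, partitioning the support of $g$ into mesoscopic blocks, and using the equicontinuity of $f_t$ together with (A2) (in the form $\tfrac1{\sqrt t}\sum_{\alpha\sqrt t\le k<\beta\sqrt t}\tfrac1{\omega_k}\to(\beta-\alpha)\mu$), one gets
\beqn
\sum_k g(k/\sqrt t)\,\bP^0(Y_t=k)=\omega_0\cdot\frac1{\sqrt t}\sum_k\frac{g(k/\sqrt t)\,f_t(k/\sqrt t)}{\omega_k}\ \longrightarrow\ \omega_0\,\mu\int g\,f
\eeqn
along the subsequence. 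Hence $\omega_0\mu\int g f=\int g\,\phi_{\sigma^2}$ for all $g\in C_c(\bR)$, so $f=\tfrac1{\omega_0\mu}\phi_{\sigma^2}$. The limit being independent of the subsequence, $f_t\to\tfrac1{\omega_0\mu}\phi_{\sigma^2}$ uniformly on compacts; multiplying through by $\omega_0$ and recalling $\omega_0 f_t(x)=\omega_{\round{\sqrt t\,x}}\sqrt t\,\bP^0(Y_t=\round{\sqrt t\,x})$ gives the theorem.

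\medskip
\noindent The hard part is the regularity step. Uniform ellipticity of $(\omega_k)$ is \emph{not} assumed, so a parabolic Harnack inequality is unavailable and equicontinuity must be squeezed out of the energy dissipation alone; obtaining the gradient bound at the correct power $t^{-3/4}$, rather than merely in an averaged-in-time sense, hinges on the monotonicity of $t\mapsto\cE(\fa(t,\slot),\fa(t,\slot))$. A secondary technicality is the block approximation in the identification step, where the highly oscillatory weight $1/\omega_k$ is controlled only in the averaged sense (A2), so the mesoscopic scale must be interleaved carefully between $1$ and $\sqrt t$.
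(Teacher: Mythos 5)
Your proposal is correct and follows essentially the same route as the paper: the identical dissipation-plus-monotonicity argument giving $\|\nabla\fa(t,\cdot)\|_{L^2}=O(t^{-3/4})$ and hence the H\"older-$\tfrac12$ asymptotic equicontinuity of $f_t$ (the paper's Lemmas~\ref{lem:apriori} and~\ref{lem:heat_kernel}), then combined with (A1)--(A3) as in Section~\ref{sec:LLT_proof}. The only difference is cosmetic: you identify the limit by Arzel\`a--Ascoli plus testing against compactly supported continuous functions (which needs the harmless conversion of the integral form of (A2) into a Riemann-sum form), whereas the paper squeezes $f_t(x)$ pointwise on small intervals and then upgrades to uniform convergence using the same equicontinuity.
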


\medskip
Let us briefly comment on Assumptions (A1)--(A3):
\begin{remark}
For any uniformly elliptic environment ($\inf_{k\in\bZ}\omega_k>0$), (A3) is satisfied as we will see in Lemma~\ref{lem:heat} of the next section. If the environment is a typical realization of a uniformly elliptic, stationary and ergodic law, then (A1) holds~\cite{Lawler_1982,PapanicolaouVaradhan_1982}. The underlying reason for this is that the balanced walk is a martingale, i.e.,
\beqn
\bE(Y_t \,|\,Y_r,\,r\leq s) = Y_s,\quad s\leq t.
\eeqn
Incidentally, (A1) can also be proved using homogenization techniques allowing essentially to replace the random diffusivity $\omega_k$ in~\eqref{eq:heat} with the effective constant $\mu^{-1} = \tfrac12\sigma^2$.
Finally, for a stationary and ergodic law $\lambda$ of the environment $\omega=(\omega_k)_{k\in\bZ}$, Birkhoff's ergodic theorem shows that (A2) holds for $\lambda$-almost-every $\omega$, provided $\int \frac{1}{\omega_0}\,\rd\lambda(\omega)<\infty$. In this case $\mu=\int \frac{1}{\omega_0}\,\rd\lambda(\omega)$.
\end{remark}

\medskip
As we will see next, an analogous result is true in the case of discrete time, which explains the peculiar structure of the distribution of the random walk displayed in Figure~\ref{fig:Distribution} by the rapidly oscillatory modulating factor $\omega_{k}$, which was observed in~\cite{SimulaStenlund_2010}. For sufficiently ballistic walks, see \cite{SimulaStenlund_2009,LeskelaStenlund_2011} and the more recent~\cite{DolgopyatGoldsheid_2012} for similar results. 

\begin{thm}\label{thm:LLT_discrete}
Let $\omega_k\in(0,\tfrac12]$, $k\in\bZ$, be given and assume (A2) of Theorem~\ref{thm:LLT} as well as (A1') and (A3') below:

\medskip
\noindent (A1') There exists a $\sigma>0$ for which
\beqn
\lim_{t\to\infty}\bP^0\bigl(X_{n}/\sqrt n \leq  x \bigr) = \int_{-\infty}^x \phi_{\sigma^2}(\xi)\,\rd\xi, \quad x\in\bR.
\eeqn
That is, the central limit theorem holds for $X_n/\sqrt n$. (Here $\bP^0$ means $X_0=0$ is given.)

\medskip 
\noindent (A3') The bound
\beqn
\sup_{n\geq 0} \sup_{k\in\bZ} \sqrt n \,(P^n)_{0,k}<\infty
\eeqn
holds.

\medskip
\noindent Then the local limit theorem is satisfied in the form that, for
\beqn
g_n(x) = \frac12\!\left\{ \bP^0\bigl(X_{n} = \round{\sqrt n\, x}\bigr) +  \bP^0\bigl(X_{n+1} = \round{\sqrt n\, x}\bigr)\right\}
\eeqn
we have
\beq\label{eq:LLT_discrete1}
\lim_{n\to\infty}\sup_{x\in I}\left |\omega_{\round{\sqrt n\,x}}\cdot \sqrt{n}\cdot g_n(x)-  \frac{1}{\mu}\,\phi_{\sigma^2}(x)\right | = 0
\eeq
on any compact set $I\subset\bR$.

\medskip
In addition, suppose that also the following assumption is satisfied:

\medskip 
\noindent (A4) The bound
\beqn
\sup_{n\ge 0}\sqrt n \sup_{N\ge n}\sum_{m=n}^N \left(\bP^0\bigl(X_{2m+2} = 0\bigr) -  \bP^0\bigl(X_{2m+1} = 0\bigr)\right) < \infty
\eeqn
holds.

\medskip
\noindent Then 
\beq\label{eq:LLT_discrete2}
\lim_{n\to\infty}\sup_{x\in I}\left |\omega_{\round{\sqrt n\,x}}\cdot \sqrt{n}\cdot  \bP^0\bigl(X_{n} = \round{\sqrt n\, x}\bigr) -  \frac{1}{\mu}\,\phi_{\sigma^2}(x)\right | = 0
\eeq
on any compact set $I\subset\bR$.
\end{thm}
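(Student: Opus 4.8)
The plan is to adapt the proof of Theorem~\ref{thm:LLT} to discrete time; the one genuinely new feature is the period‑two oscillation of $k\mapsto(P^n)_{k,0}$, which is exactly what the averaging in $g_n$ and the extra hypothesis (A4) are designed to tame. Set $a(n,k)=(P^n)_{k,0}$, a solution of the discrete heat equation~\eqref{eq:heat_discrete}, and put $b(n,k)=\tfrac12\bigl(a(n,k)+a(n+1,k)\bigr)$. Since a difference of two time‑translates of a solution of \eqref{eq:heat_discrete} is again a solution, $b$ solves $b(n+1,k)-b(n,k)=\omega_k\Delta b(n,k)$ with $\sum_k\pi_k b(n,k)=1/\omega_0$; and, unlike $a(n,\cdot)$ --- which for the simple random walk alternates between $0$ and a $\phi$‑shaped profile as $k$ varies --- the kernel $b(n,\cdot)$ carries no parity oscillation in $k$, which is what keeps a spatial regularity estimate available. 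I would then rerun the continuous‑time argument with $b$ in place of $\fa$: the parabolic $L^\infty$ bound $\sqrt n\,b(n,k)\le C$ is immediate from (A3') via $\omega_k b(n,k)=\tfrac12\omega_0\bigl(\bP^0(X_n=k)+\bP^0(X_{n+1}=k)\bigr)$; the discrete energy estimate --- using $\pi_k\omega_k=1$ and reversibility, which give $\|b(n,\cdot)\|_{\ell^2(\pi)}^2\lesssim a(2n,0)\lesssim n^{-1/2}$ and hence $\sum_{m\ge n}\|\nabla b(m,\cdot)\|_{\ell^2}^2\lesssim n^{-1/2}$ --- together with the Nash inequality on $\bZ$, yields the same uniform Hölder modulus of continuity for $x\mapsto\sqrt n\,b(n,\round{\sqrt n x})$ as in continuous time; and Arzel\`a--Ascoli, (A1') and the Riemann‑sum limit (A2) (applied to $\tfrac1{\sqrt n}\sum_k h(k/\sqrt n)/\omega_k\to\mu\int h$) force every subsequential limit to equal $(\mu\omega_0)^{-1}\phi_{\sigma^2}$. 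By uniqueness of the limit, $\sqrt n\,b(n,\round{\sqrt n x})\to(\mu\omega_0)^{-1}\phi_{\sigma^2}(x)$ uniformly on compacts, which is \eqref{eq:LLT_discrete1} after multiplication by $\omega_{\round{\sqrt n x}}$. (Alternatively, \eqref{eq:LLT_discrete1} can be deduced from Theorem~\ref{thm:LLT} by de‑Poissonization: a $\mathrm{Poisson}(n)$ average of $a(\cdot,k)$ over its $O(\sqrt n)$‑wide window reproduces $b(n,k)$ to leading order, the parity oscillation being killed since $\sum_m e^{-n}(-n)^m/m!=e^{-2n}$.)

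For \eqref{eq:LLT_discrete2}, by \eqref{eq:LLT_discrete1} and the identity $a(n,k)-b(n,k)=\tfrac12 d(n,k)$ with $d(n,k):=a(n,k)-a(n+1,k)$, it suffices to prove $\sqrt n\sup_{|k|\le C\sqrt n}|d(n,k)|\to0$ for every $C>0$. This parity defect again solves \eqref{eq:heat_discrete}, with the finitely supported, $\pi$‑mean‑zero initial datum $d(0,\cdot)=(I-P)1_{\{0\}}$; equivalently $d(n,\cdot)=(I-P)a(n,\cdot)$, so that $d(n,k)=-\omega_k\Delta a(n,k)$ and, by reversibility, $d(m,k)=-\omega_k\,\Delta_j(P^m)_{j,k}\big|_{j=0}$. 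Assumption (A4) supplies the single non‑generic input: combined with (A3') --- which controls $a(2n,0)$ and $a(2N+2,0)$ by $O(n^{-1/2})$ and, via $a(2m,0)-a(2m+2,0)=d(2m,0)+d(2m+1,0)$, allows passing between the two parity classes --- hypothesis (A4) says precisely that the parity‑restricted tail sums $\sum_{m=n}^N d(2m,0)$ and $\sum_{m=n}^N d(2m+1,0)$ are $O(n^{-1/2})$ uniformly in $N$, i.e.\ that the defect at the origin decays within each parity like $n^{-3/2}$ instead of the generic $n^{-1/2}$ it would have for the periodic simple random walk (for which (A4) indeed fails). One then transports this improved origin estimate to the diffusive window by feeding the summed form of the reversed representation $d(m,k)=-\omega_k\,\Delta_j(P^m)_{j,k}\big|_{j=0}$, together with the parabolic (Nash) regularity of $d$, into the heat semigroup, so that the gain at $j=0$ spreads to $|k|\lesssim\sqrt n$.

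The hard part is precisely this last propagation. The a priori machinery --- even with the Nash inequality --- yields only $|d(n,k)|\lesssim n^{-1/2}$ uniformly in $k$, with no gain anywhere, and a naive ``$d(n,\cdot)$ varies slowly in $n$'' estimate (summing $\sim n$ time‑increments of size $O(n^{-3/2})$) loses exactly the factor one needs to win. The improvement must therefore be extracted from the genuine cancellation encoded in (A4) and carried through the parabolic equation for $d$ without being swamped by the a priori bounds, uniformly over $x\in I$; this is where the real work of the discrete‑time case lies. The only other point of care, already settled above by working with $b$ rather than $a$, is that the spatial regularity estimate simply fails for $a(n,\cdot)$ itself --- its Dirichlet energy $\|\nabla a(n,\cdot)\|_{\ell^2}^2$ is only $O(n^{-1/2})$ and not summable, because of the parity frequency --- so it has to be run for the parity‑averaged kernel $b$.
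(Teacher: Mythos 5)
Your first half is essentially the paper's own route: average consecutive time steps to kill the parity oscillation, show the Dirichlet energy of $b(n,\cdot)=\tfrac12(a(n,\cdot)+a(n+1,\cdot))$ has tail sum $\lesssim \pi_0\,a(2n,0)\lesssim n^{-1/2}$ under (A3'), use monotonicity of $n\mapsto\|\nabla b(n,\cdot)\|_{L^2}^2$ to get $\|\nabla b(n,\cdot)\|_{L^2}^2\lesssim n^{-3/2}$, deduce the discrete analogue of Lemma~\ref{lem:heat_kernel} for $\sqrt n\,b(n,\round{\sqrt n\,x})$, and rerun the argument of Section~\ref{sec:LLT_proof}. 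Note, however, that the energy-tail estimate you state in one line is not the direct analogue of the continuous-time computation: in discrete time $\|b(m,\cdot)\|_{L^2(\pi)}^2-\|b(m+1,\cdot)\|_{L^2(\pi)}^2=\average{b,(I-P^2)b}_\pi$, whereas $2\|\nabla b\|_{L^2}^2=\average{b,(I-P^2)b}_\pi+\average{b,(P-I)^2b}_\pi$, and the extra term $\average{b,(P-I)^2b}_\pi=\tfrac14\|(P^2-I)a\|_{L^2(\pi)}^2$ needs its own telescoping bound via the $L^2(\pi)$-contractivity of $P$; this is the actual content of the paper's Lemma~\ref{lem:apriori_discrete}, and the Nash inequality plays no role in it since (A3') is assumed.

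The second half has a genuine gap, and your framing of it hides the much shorter correct argument. You reduce \eqref{eq:LLT_discrete2} to showing $\sqrt n\sup_{|k|\le C\sqrt n}|d(n,k)|\to 0$ for the parity defect $d(n,\cdot)=a(n,\cdot)-a(n+1,\cdot)$, observe that (A4) only gives improved decay of parity-restricted sums of $d$ at the origin, and then explicitly leave open the propagation of this gain to the whole diffusive window --- you say yourself that this is ``where the real work lies''. That unproved propagation step is precisely the missing proof. Moreover, the premise motivating your detour --- that $\|\nabla a(n,\cdot)\|_{L^2}^2$ is only $O(n^{-1/2})$, not summable, so the regularity estimate ``simply fails'' for $a$ --- is false once (A4) is assumed: controlling that energy is exactly what (A4) buys. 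The mechanism you missed is that, by reversibility, $\average{a(m,\cdot),(P^2-P)a(m,\cdot)}_\pi=\pi_0\bigl(a(2m+2,0)-a(2m+1,0)\bigr)$, i.e.\ the (A4) summands are \emph{global} spatial quadratic forms in $a(m,\cdot)$, not merely origin quantities. Writing $(P-I)^2=2(P^2-P)+(I-P^2)$ in \eqref{eq:grad} gives
\beqn
\sum_{m=n}^N\|\nabla a(m,\cdot)\|_{L^2}^2=\sum_{m=n}^N\average{a(m,\cdot),(P^2-P)a(m,\cdot)}_\pi+\sum_{m=n}^N\average{a(m,\cdot),(I-P^2)a(m,\cdot)}_\pi,
\eeqn
where the second sum telescopes and is bounded by $\|a(n,\cdot)\|_{L^2(\pi)}^2=\pi_0\,a(2n,0)\lesssim n^{-1/2}$ by (A3'), while (A4) bounds the first sum by $Cn^{-1/2}$ uniformly in $N$. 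Hence the energy tail of $a$ itself is $O(n^{-1/2})$; combined with the monotonicity of $n\mapsto\|\nabla a(n,\cdot)\|_{L^2}^2$ this yields $\|\nabla a(n,\cdot)\|_{L^2}^2\le Cn^{-3/2}$, the equicontinuity of $\sqrt n\,a(n,\round{\sqrt n\,x})$ follows exactly as for $b$, and \eqref{eq:LLT_discrete2} is obtained by running the proof of Theorem~\ref{thm:LLT} directly on $a$ --- no pointwise estimate on the defect $d$ and no propagation argument are needed.
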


\medskip
As in continuous time, (A3') is satisfied for uniformly elliptic environments (Lemma~\ref{lem:heat_discrete}), and if the environment is also ergodic, then (A1') and (A2) hold. Assumption~(A4) has to do with aperiodicity of the return times to zero, which is of concern in discrete time. For example, the simple symmetric random walk with $\omega_k = \tfrac12$ for all $k\in\bZ$ is recurrent and satisfies $\bP^0\bigl(X_{2m+1} = 0\bigr)=0$ for all $m\ge 0$, and thus violates both (A4) and \eqref{eq:LLT_discrete2}; on the other hand, \eqref{eq:LLT_discrete1} is satisfied because of the average of two successive time steps in the expression of $g_n(x)$. 
Note that $\bP^0\bigl(X_{2m+2} = 0\bigr) -  \bP^0\bigl(X_{2m+1} = 0\bigr)=\omega_0\average{P^m1_{\{0\}},(P^2-P)P^{m}1_{\{0\}}}_\pi\leq 0$ if the eigenvalues of $P$ are non-negative, so that~(A4) is trivially satisfied.  This is true if the walk is lazy, i.e., $\omega_k\leq \tfrac 14$ for all $k\in\bZ$, because then $P = \frac12(I+\hat P)$ where $\hat P$ --- like $P$ --- is a self-adjoint Markov contraction on $L^2(\pi)$. Simulations suggests that generally, under the other conditions of Theorem~\ref{thm:LLT_discrete}, (A4) holds if $\omega_k\neq \tfrac12$ for at least one $k\in\bZ$. In other words, (A4) could well turn out to be equivalent with the Markov chain being aperiodic, but we do not prove this.

\section{Regularity and boundedness of solutions}

Define the quadratic forms
\beqn
\average{ u,v} = \sum_{k\in \bZ} u(k)v(k)
\quad\text{and}\quad
\average{ u,v}_\pi = \sum_{k\in \bZ} \pi_k\, u(k)v(k),
\eeqn
and~\mbox{$\|\slot\|_{L^2}$} and~\mbox{$\|\slot\|_{L^2(\pi)}$} for the corresponding $L^2$-norms, respectively. The following result controls local variations of the solutions to~\eqref{eq:heat} through a gradient bound.
\begin{lem}\label{lem:apriori}
For any $r>0$, the functions defined in~\eqref{eq:a_def} satisfy
\beqn
\int_r^\infty \|\nabla \fa(t,\cdot)\|_{L^2}^2\,\rd t \leq  \tfrac12\pi_0 \,\fa(2r,0).
\eeqn
Moreover, the map $t\mapsto \|\nabla \fa(t,\cdot)\|_{L^2}^2$ is non-increasing.
\end{lem}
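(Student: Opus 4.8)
The plan is to derive both assertions from the energy identity obtained by testing the heat equation~\eqref{eq:heat} against $\fa$ itself, together with a further differentiation in time. For the first bound, fix $r>0$ and compute, for $t\geq 0$,
\[
\frac{\rd}{\rd t}\tfrac12\average{\fa(t,\cdot),\fa(t,\cdot)}_\pi
= \average{\fa(t,\cdot),\partial_t\fa(t,\cdot)}_\pi
= \sum_k \pi_k\,\fa(t,k)\,\omega_k\,\Delta\fa(t,k)
= \sum_k \fa(t,k)\,\Delta\fa(t,k),
\]
using $\pi_k\omega_k=1$. Summation by parts on $\bZ$ (legitimate because $\fa(t,\cdot)$ decays rapidly in $k$, which follows from the Poisson representation and the nearest-neighbor structure) turns the right-hand side into $-\|\nabla\fa(t,\cdot)\|_{L^2}^2$. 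Hence $t\mapsto\average{\fa(t,\cdot),\fa(t,\cdot)}_\pi$ is non-increasing, and integrating from $r$ to $\infty$ gives
\[
\int_r^\infty \|\nabla\fa(t,\cdot)\|_{L^2}^2\,\rd t
= \tfrac12\average{\fa(r,\cdot),\fa(r,\cdot)}_\pi - \tfrac12\lim_{t\to\infty}\average{\fa(t,\cdot),\fa(t,\cdot)}_\pi
\leq \tfrac12\average{\fa(r,\cdot),\fa(r,\cdot)}_\pi.
\]
It remains to recognize the right-hand side as $\tfrac12\pi_0\,\fa(2r,0)$: since $\cP^t$ is self-adjoint on $L^2(\pi)$ and $\fa(t,\cdot)=\cP^t 1_{\{0\}}$, we have $\average{\fa(r,\cdot),\fa(r,\cdot)}_\pi = \average{\cP^r 1_{\{0\}},\cP^r 1_{\{0\}}}_\pi = \average{1_{\{0\}},\cP^{2r}1_{\{0\}}}_\pi = \pi_0\,(\cP^{2r})_{0,0}=\pi_0\,\fa(2r,0)$, where the last step uses reversibility, $(\cP^{2r})_{0,0}=(\cP^{2r})_{0,0}$, so in fact $\fa(2r,0)=(\cP^{2r})_{0,0}$ directly. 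This proves the integral bound.

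For the monotonicity of $t\mapsto\|\nabla\fa(t,\cdot)\|_{L^2}^2$, I would differentiate once more. Writing $w(t,\cdot)=\partial_t\fa(t,\cdot)$, which also solves the heat equation $\partial_t w = \omega_\cdot\Delta w$ (differentiate~\eqref{eq:heat} in $t$), one has by the same summation-by-parts computation
\[
\frac{\rd}{\rd t}\|\nabla\fa(t,\cdot)\|_{L^2}^2
= -2\average{\Delta\fa(t,\cdot),\partial_t\fa(t,\cdot)}
= -2\sum_k \Delta\fa(t,k)\,\omega_k\,\Delta\fa(t,k)
= -2\sum_k \omega_k\,(\Delta\fa(t,k))^2 \leq 0,
\]
since $\omega_k>0$. (The identity $\frac{\rd}{\rd t}\|\nabla\fa\|_{L^2}^2 = 2\average{\nabla\fa,\nabla\partial_t\fa} = -2\average{\Delta\fa,\partial_t\fa}$ is again summation by parts.) This gives the claimed monotonicity.

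The main technical point to be careful about is the justification of termwise differentiation and summation by parts on the infinite lattice $\bZ$: one must check that the series $\sum_k\fa(t,k)$, $\sum_k|\nabla\fa(t,k)|^2$ and $\sum_k\omega_k(\Delta\fa(t,k))^2$ converge and that the boundary terms at $\pm\infty$ vanish. This follows from the fact that $\fa(t,k)=(\cP^t)_{k,0}$ decays faster than any power of $|k|$ for each fixed $t$ — indeed, reaching site $k$ from $0$ in the Poissonized walk requires at least $|k|$ jumps, so $\fa(t,k)\leq e^{-t}\sum_{n\geq|k|}t^n/n!$ — together with uniform-in-$t$-on-$[r,\infty)$ control coming from the contraction property of $\cP^t$ on $\ell^2$. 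Once this decay is in hand, all the manipulations above are elementary, and the differentiations under the sum are justified by dominated convergence. I expect no other obstacle; the argument is a clean discrete analogue of the classical $L^2$-energy estimates for parabolic equations.
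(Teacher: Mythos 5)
Your proof is correct and follows essentially the same route as the paper's: the energy identity $\partial_t\average{\fa,\fa}_\pi=-2\|\nabla\fa\|_{L^2}^2$ integrated in time for the first claim (with the self-adjointness of $\cP^t$ identifying $\|\fa(r,\cdot)\|_{L^2(\pi)}^2$ with $\pi_0\fa(2r,0)$), and a second differentiation giving $\partial_t\|\nabla\fa\|_{L^2}^2=-2\average{\omega\Delta\fa,\Delta\fa}\le 0$ for monotonicity. Your closing paragraph on rapid decay of $\fa(t,\cdot)$ to justify the termwise manipulations is a reasonable extra detail that the paper leaves implicit.
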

\begin{proof}
By reversibility, $\cP^t$ is self-adjoint with respect to the inner product $\average{\slot,\slot}_\pi$. Therefore,
$
 \|\fa(r,\cdot)\|_{L^2(\pi)}^2 = \average{\fa(r,\cdot),\fa(r,\cdot)}_\pi = \average{1_{\{0\}},\cP^{2r}1_{\{0\}}}_\pi = \pi_0 \,\fa(2r,0).
$
By~\eqref{eq:heat},
\beqn
 \partial_t \average{\fa,\fa}_\pi = 2\average{\partial_t \fa,\fa}_\pi  = 2\average{\omega \Delta \fa,\fa}_\pi
= 2\average{\Delta \fa,\fa} = -2\average{\nabla \fa,\nabla \fa}.
\eeqn
The last identity was obtained by summing by parts. The first claim of the lemma follows by integrating with respect to time. On the other hand,
\beqn
\partial_t \average{\nabla \fa,\nabla \fa} = 2 \average{\nabla \partial_t \fa,\nabla \fa} = \average{\nabla (\omega \Delta \fa),\nabla \fa} = - \average{\omega \Delta \fa, \Delta \fa} =  - \average{\omega \Delta \fa, \omega\Delta \fa}_\pi,
\eeqn
where the rightmost expression is non-positive.
\end{proof}

The value of Lemma~\ref{lem:apriori} becomes apparent next, when we look at the solutions of~\eqref{eq:heat} for increasing values of $t$ at a diffusive scale --- that is, when we study the functions
\beqn
f_t(x) = \sqrt t\, \fa(t,\round{\sqrt t\, x} ).
\eeqn

\begin{lem}\label{lem:heat_kernel}
Suppose $\sup_{t> 0}\sqrt t \, (\cP^t)_{0,0}<\infty$ holds.
Then the functions $f_t$ are asymptotically equicontinuous in the sense that there exists a  $C>0$ such that, for any $\ve>0$,
\beqn
|f_t(x)-f_t(y)| \leq C\ve,
\eeqn
provided that $|x-y|\leq \ve^2$ and $t\geq \ve^{-4}$.
\end{lem}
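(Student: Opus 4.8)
The plan is to turn the a priori energy estimates of Lemma~\ref{lem:apriori} into a pointwise-in-time decay rate for $\|\nabla\fa(t,\cdot)\|_{L^2}$, and then to convert that $L^2$ gradient control into a modulus of continuity for $f_t$ by a telescoping-sum and Cauchy--Schwarz argument, keeping careful track of the powers of $t$ that appear.

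First I would combine the two conclusions of Lemma~\ref{lem:apriori}: the monotonicity of $s\mapsto\|\nabla\fa(s,\cdot)\|_{L^2}^2$ together with the integral bound, the latter applied with $r=t/2$, yields
\[
\tfrac{t}{2}\,\|\nabla\fa(t,\cdot)\|_{L^2}^2 \le \int_{t/2}^{t}\|\nabla\fa(s,\cdot)\|_{L^2}^2\,\rd s \le \tfrac12\,\pi_0\,\fa(t,0).
\]
Since $\fa(t,0)=(\cP^t)_{0,0}$, the hypothesis $\sup_{t>0}\sqrt t\,(\cP^t)_{0,0}=:M<\infty$ gives $\fa(t,0)\le M\,t^{-1/2}$, so that $\|\nabla\fa(t,\cdot)\|_{L^2}^2\le\pi_0 M\,t^{-3/2}$.

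Next, writing $a=\round{\sqrt t\,x}$ and $b=\round{\sqrt t\,y}$ with, say, $a\le b$, I would telescope $\fa(t,b)-\fa(t,a)=\sum_{k=a}^{b-1}\nabla\fa(t,k)$ and apply Cauchy--Schwarz to get $|\fa(t,b)-\fa(t,a)|\le\sqrt{b-a}\,\|\nabla\fa(t,\cdot)\|_{L^2}$. Because $|\round{u}-\round{v}|\le|u-v|+1$, one has $b-a\le\sqrt t\,|x-y|+1$, hence
\[
|f_t(x)-f_t(y)|=\sqrt t\,|\fa(t,b)-\fa(t,a)|\le(\pi_0 M)^{1/2}\,t^{-1/4}\bigl(\sqrt t\,|x-y|+1\bigr)^{1/2}.
\]
Under the hypotheses $|x-y|\le\ve^2$ and $t\ge\ve^{-4}$ we have $\sqrt t\,\ve^2\ge1$, so $\sqrt t\,|x-y|+1\le 2\sqrt t\,\ve^2$ and the bound above is at most $\sqrt{2\pi_0 M}\,\ve$; thus $C=\sqrt{2\pi_0 M}$ works.

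The only real point in the argument --- the step I expect to be the ``obstacle'' --- is to notice that the scaling is tight: the $L^2$ gradient bound decays as $t^{-3/4}$, the diffusive prefactor $\sqrt t$ contributes $t^{1/2}$, and a spatial window of length $\ve^2$ spans $\sim\sqrt t\,\ve^2$ lattice sites, whose square root contributes $t^{1/4}\ve$; these three powers of $t$ cancel exactly, which is precisely why the stated constraints $|x-y|\le\ve^2$ and $t\ge\ve^{-4}$ are the natural ones, and the lower bound $t\ge\ve^{-4}$ is also what lets the $+1$ from the floor functions be absorbed into the main term. Everything else is routine.
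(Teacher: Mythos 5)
Your proof is correct and is essentially the paper's own argument: both telescope the difference $\fa(t,\round{\sqrt t\,y})-\fa(t,\round{\sqrt t\,x})$ and apply Cauchy--Schwarz, both combine the monotonicity and the integral bound of Lemma~\ref{lem:apriori} (with $r=t/2$) to get $\tfrac t2\|\nabla\fa(t,\cdot)\|_{L^2}^2\le\tfrac12\pi_0\,\fa(t,0)$, and both then invoke the hypothesis $\sqrt t\,\fa(t,0)\le M$ and the constraints $|x-y|\le\ve^2$, $t\ge\ve^{-4}$ to reach the same constant $\sqrt{2\pi_0 M}$. The only cosmetic difference is that you extract the intermediate pointwise rate $\|\nabla\fa(t,\cdot)\|_{L^2}^2\le\pi_0 M\,t^{-3/2}$ before substituting, while the paper keeps $\fa(t,0)$ symbolic a step longer.
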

\begin{proof}
For any pair of real numbers $x<y$, the Schwarz inequality yields 
\beqn
\begin{split}
|f_t(y)-f_t(x)|^2 = t\left(\sum_{k=\round{\sqrt t\, x}}^{\round{\sqrt t\, y}-1} \nabla \fa(t,k) \right)^2 \leq t\,\bigl(\round{\sqrt t\, y}-\round{\sqrt t\, x}\bigr) \|\nabla \fa(t,\cdot)\|_{L^2}^2.
\end{split}
\eeqn
By Lemma~\ref{lem:apriori}, the $L^2$-norm above is non-increasing in $t$. Thus,
\beqn
\frac t2 \,\|\nabla \fa(t,\cdot)\|_{L^2}^2 \leq \int_{\frac t2}^\infty \|\nabla \fa(s,\cdot)\|_{L^2}^2\,\rd s \leq  \tfrac12\pi_0 \,\fa(t,0).
\eeqn
Combining the estimates and recalling that $\sqrt t\,\fa(t,0)=\sqrt t \, (\cP^t)_{0,0}$ is bounded, we get
\beqn
|f_t(y)-f_t(x)|^2 \leq \bigl(\sqrt t\, (y-x) + 1\bigr)\, \pi_0 \,\fa(t,0) \leq C'\bigl((y-x)+\tfrac1{\sqrt t}\bigr).
\eeqn
This implies what was to be shown.
\end{proof}

Of course,~\eqref{eq:heat_kernel} implies the condition of Lemma~\ref{lem:heat_kernel}.
Notice that~\eqref{eq:heat_kernel} holds in a homogeneous environment satisfying $\omega_k \equiv \bar\omega$ for all $k\in\bZ$, for any $\bar\omega\in(0,\tfrac12]$, because the process is then a simple symmetric random walk. Intuitively, ``more diffusivity'' should lead to more rapid decay of $(\cP^t)_{0,k}$ with $t$. Indeed, we will next argue that~\eqref{eq:heat_kernel} remains true also when~$\omega_k$ is not constant but $\omega_k \geq \bar\omega$ is satisfied for all $k\in\bZ$. To this end, we recall from~\cite{CarlenKusuokaStroock_1987} a result concerning reversible Markov semigroups, adapted to our needs. It is based on the early work~\cite{Nash_1958}.
But first, let us introduce the Dirichlet quadratic form
\beq\label{eq:Dirichlet}
\cE(u,v) = \average{\nabla u,\nabla v}.
\eeq
\begin{thm}[Nash~\cite{Nash_1958}; Carlen, Kusuoka and Stroock~\cite{CarlenKusuokaStroock_1987}]\label{thm:Nash}
The following conditions are equivalent for the pair $(\cP^t,\pi)$:

\medskip
\noindent (A) There exists a constant $A>0$ such that the Nash inequality
\beqn
\|u\|^6_{L^2(\pi)} \leq A\,\cE(u,u) \|u\|^4_{L^1(\pi)}, \quad u\in L^2(\pi),
\eeqn
holds.

\medskip
\noindent (B) There exists a constant $B>0$ such that the heat kernel bound
\beqn
\|\cP^t\|_{\cL(L^1(\pi),L^\infty(\pi))} \leq B\,t^{-1/2}, \quad t>0,
\eeqn
holds. (Here the norm is the one of linear operators from $L^1(\pi)$ to $L^\infty(\pi)$).
\end{thm}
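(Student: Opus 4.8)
The plan is to reproduce the classical argument of Nash in the streamlined form of Carlen, Kusuoka and Stroock, proving the two implications (A)$\Rightarrow$(B) and (B)$\Rightarrow$(A) separately. First I would record three standing facts, each immediate or already essentially contained in the excerpt: (i) $\partial_t\average{\cP^t u,\cP^t u}_\pi=-2\cE(\cP^t u,\cP^t u)$, obtained by summation by parts exactly as in the proof of Lemma~\ref{lem:apriori}; (ii) since $\cP^t1=1$ and $\cP^t$ is positivity preserving, $\cP^t$ is a contraction on $L^\infty(\pi)$, hence, by duality and self-adjointness, on $L^1(\pi)$, and by interpolation on $L^2(\pi)$; and (iii) $s\mapsto\cE(\cP^s u,\cP^s u)$ is non-increasing, by the computation in the proof of Lemma~\ref{lem:apriori}. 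In the present setting $\cP^t=e^{t(P-I)}$ is norm continuous on $L^2(\pi)$ because $P$ is an $L^2(\pi)$-contraction (a one-line consequence of reversibility and Jensen), so the differentiations below require no further care.

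For (A)$\Rightarrow$(B) I would fix $u$ with $\|u\|_{L^1(\pi)}=1$ and set $N(t)=\|\cP^t u\|_{L^2(\pi)}^2$. Since $\|\cP^t u\|_{L^1(\pi)}\le1$, the Nash inequality gives $\cE(\cP^t u,\cP^t u)\ge A^{-1}N(t)^3$, whence $N'(t)\le-2A^{-1}N(t)^3$ by (i); integrating the resulting differential inequality for $N^{-2}$ yields $N(t)\le(A/(4t))^{1/2}$, i.e.\ $\|\cP^t\|_{\cL(L^1(\pi),L^2(\pi))}\le(A/(4t))^{1/4}$. Self-adjointness makes this also the norm of $\cP^t\colon L^2(\pi)\to L^\infty(\pi)$, and factoring $\cP^t=\cP^{t/2}\cP^{t/2}$ produces $\|\cP^t\|_{\cL(L^1(\pi),L^\infty(\pi))}\le(A/(2t))^{1/2}$, which is (B).

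For (B)$\Rightarrow$(A) I would fix $u\in L^1(\pi)\cap L^2(\pi)$ and $t>0$ and combine two estimates. From (B), $\|\cP^t u\|_{L^2(\pi)}^2=\average{u,\cP^{2t}u}_\pi\le\|u\|_{L^1(\pi)}\|\cP^{2t}u\|_{L^\infty(\pi)}\le B(2t)^{-1/2}\|u\|_{L^1(\pi)}^2$. From (i) and (iii),
\beqn
\|u\|_{L^2(\pi)}^2-\|\cP^t u\|_{L^2(\pi)}^2=\int_0^t2\cE(\cP^s u,\cP^s u)\,\rd s\le2t\,\cE(u,u).
\eeqn
Adding these and minimizing the bound $\|u\|_{L^2(\pi)}^2\le2t\,\cE(u,u)+B(2t)^{-1/2}\|u\|_{L^1(\pi)}^2$ over $t>0$ — the minimizer being a fixed multiple of $(\|u\|_{L^1(\pi)}^2/\cE(u,u))^{2/3}$ — gives $\|u\|_{L^2(\pi)}^2\le C\,\cE(u,u)^{1/3}\|u\|_{L^1(\pi)}^{4/3}$ with $C$ depending only on $B$; cubing yields (A), a routine density/approximation argument disposing of the case when $\cE(u,u)$ or $\|u\|_{L^1(\pi)}$ is infinite.

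The only genuine difficulty I anticipate is analytic housekeeping on the countable, infinite-mass state space: the legitimacy of the summation by parts, the $L^2(\pi)$-differentiability of $t\mapsto\cP^t u$, the $L^p(\pi)$ duality and interpolation statements, and the monotonicity (iii). None of these carries a new idea, and all can, if one prefers, be reduced to finite state spaces by exhausting $\bZ$ and passing to the limit. The load-bearing computations are simply the ODE comparison in (A)$\Rightarrow$(B) and the one-parameter optimization in (B)$\Rightarrow$(A).
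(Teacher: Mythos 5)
The paper itself does not prove Theorem~\ref{thm:Nash}; it is stated as a citation to Nash and to Carlen--Kusuoka--Stroock, and the only piece of it the paper actually writes out is the (B')$\Rightarrow$(A') direction of the discrete-time analogue, inside the ``Claim'' in the proof of Lemma~\ref{lem:heat_discrete}. So your proposal is a reconstruction of a result the paper takes as given, and it is the standard CKS argument, executed correctly in both directions.

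Your (B)$\Rightarrow$(A) half is precisely the continuous-time counterpart of the paper's Claim: you bound $\|\cP^t u\|_{L^2(\pi)}^2\le B(2t)^{-1/2}\|u\|_{L^1(\pi)}^2$ via self-adjointness and (B), bound the Dirichlet integral $\int_0^t 2\cE(\cP^s u,\cP^s u)\,\rd s\le 2t\,\cE(u,u)$ via monotonicity of $s\mapsto\cE(\cP^s u,\cP^s u)$, add, and optimize in $t$. The one thing worth flagging is that in continuous time the minimizer $t^*$ ranges over all of $(0,\infty)$, which is exactly why your (A) comes out unconditional, whereas in the paper's discrete-time Claim the minimizer $n^*$ must be an integer $\ge 1$, forcing the side condition $\cE_2(u,u)\le C\|u\|_{L^1(\pi)}^2$ and the extra care about $B\ge 2^{3/2}C$. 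Your (A)$\Rightarrow$(B) half, via the Nash differential inequality $N'(t)\le -2A^{-1}N(t)^3$ for $N(t)=\|\cP^t u\|_{L^2(\pi)}^2$, the resulting $N(t)\le(A/4t)^{1/2}$, and the $\cP^t=\cP^{t/2}\cP^{t/2}$ factorization with self-adjoint duality, is the textbook Nash iteration; the paper does not reproduce it because it is not needed for the application (only (B)$\Rightarrow$(A) is used, with (B) supplied by comparison to the homogeneous walk). Your preliminary facts (i)--(iii) are all justified, and the remark that $\cP^t=e^{t(P-I)}$ is norm-continuous because $P$ is bounded on $L^2(\pi)$ disposes of the differentiability issues cleanly.
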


Observe that the Dirichlet form has the equivalent expression
$
\cE(u,v) 
= \average{u,(I-P)v}_\pi.
$
Nevertheless it does not in our case depend on the transition probabilities~$\omega_k$ (unlike~$P$ and~$\pi$). This fact allows us to carry out, for uniformly elliptic environments, the proof of~\eqref{eq:heat_kernel} alluded to earlier:

\begin{lem}\label{lem:heat}
If $\inf_{k\in\bZ}\omega_k>0$, there exists a constant $D>0$ such that
\beqn
(\cP^t)_{i,j} \leq D\,t^{-1/2}, \quad (i,j)\in\bZ^2,\,t> 0.
\eeqn
In particular,~\eqref{eq:heat_kernel} in Assumption (A3) is satisfied.
\end{lem}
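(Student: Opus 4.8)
The plan is to invoke the Nash–Carlen–Kusuoka–Stroock equivalence (Theorem~\ref{thm:Nash}) in the direction (A)$\Rightarrow$(B), so the task reduces to verifying the Nash inequality
\beqn
\|u\|^6_{L^2(\pi)} \leq A\,\cE(u,u)\,\|u\|^4_{L^1(\pi)}
\eeqn
for our specific pair $(\cP^t,\pi)$, with a constant $A$ depending only on $\bar\omega := \inf_k\omega_k>0$. First I would observe, as the excerpt already points out, that the Dirichlet form $\cE(u,v)=\average{\nabla u,\nabla v}$ does \emph{not} involve the $\omega_k$; it is the Dirichlet form of the simple symmetric random walk on $\bZ$. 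So the strategy is: establish the Nash inequality once and for all for the \emph{reference} pair (simple random walk, counting measure), and then transfer it to $(\cP^t,\pi)$ by noting that $\pi_k=1/\omega_k\in[2,1/\bar\omega)$ is comparable to the counting measure, uniformly in $k$.

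The two ingredients are thus the following. (i) The classical one-dimensional Nash inequality: there is an absolute constant $A_0$ with $\|u\|_{L^2(\bZ)}^6 \le A_0\,\cE(u,u)\,\|u\|_{L^1(\bZ)}^4$ for all finitely supported $u$ on $\bZ$. I would prove this in the standard Fourier way — write $\hat u(\theta)=\sum_k u(k)e^{ik\theta}$ on $[-\pi,\pi]$, so that $\|u\|_{L^2}^2 = \frac1{2\pi}\|\hat u\|_{L^2}^2$ and $\cE(u,u) = \frac1{2\pi}\int |e^{i\theta}-1|^2|\hat u(\theta)|^2\,\rd\theta$, while $\|\hat u\|_{L^\infty}\le\|u\|_{L^1}$ — then split the frequency integral at $|\theta|\le\rho$ versus $|\theta|>\rho$, bounding the low part by $2\rho\|u\|_{L^1}^2/(2\pi)$ and the high part by $c\,\rho^{-2}\cE(u,u)$ using $|e^{i\theta}-1|^2\asymp\theta^2$ on $[-\pi,\pi]$, and optimizing over $\rho>0$. (ii) The comparison step: since $2\le\pi_k<1/\bar\omega$ for all $k$, we have $\|u\|_{L^2(\pi)}^2\ge 2\|u\|_{L^2}^2$ and $\|u\|_{L^1(\pi)}\le \bar\omega^{-1}\|u\|_{L^1}$ — wait, these go the wrong way to directly combine, so instead I would bound in the convenient direction: $\|u\|_{L^2(\pi)}^2 \le \bar\omega^{-1}\|u\|_{L^2}^2$ and $\|u\|_{L^1(\pi)}\ge 2\|u\|_{L^1}$ give, combined with (i),
\beqn
\|u\|_{L^2(\pi)}^6 \le \bar\omega^{-3}\|u\|_{L^2}^6 \le \bar\omega^{-3}A_0\,\cE(u,u)\,\|u\|_{L^1}^4 \le \bar\omega^{-3}A_0\,\cE(u,u)\,\|u\|_{L^1(\pi)}^4,
\eeqn
so (A) holds with $A=\bar\omega^{-3}A_0$. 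Then Theorem~\ref{thm:Nash} yields $\|\cP^t\|_{\cL(L^1(\pi),L^\infty(\pi))}\le B t^{-1/2}$, which applied to $u=1_{\{j\}}$ (whose $L^1(\pi)$-norm is $\pi_j$) gives $(\cP^t)_{i,j}\,\pi_j = |(\cP^t u)(i)|\cdot$ something — more cleanly, $\|\cP^t 1_{\{j\}}\|_{L^\infty(\pi)} = \max_i \pi_i(\cP^t)_{i,j}$, and dividing by $\|1_{\{j\}}\|_{L^1(\pi)}=\pi_j$ together with $\pi_i,\pi_j$ bounded above and below gives $(\cP^t)_{i,j}\le D t^{-1/2}$ with $D$ depending only on $\bar\omega$. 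Taking $i=0$ yields $\sqrt t\,(\cP^t)_{0,k}\le D$ uniformly, and since the bound is trivially true for $t$ bounded away from $0$ (the entries are $\le 1$), we get $\sup_{t\ge 0}\sup_k\sqrt t\,(\cP^t)_{0,k}<\infty$, i.e.\ (A3).

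The main obstacle is essentially bookkeeping rather than conceptual: one must be careful about the direction of the measure-comparison inequalities so that the Nash inequality survives the transfer, and one must correctly convert the $L^1(\pi)\to L^\infty(\pi)$ operator bound of Theorem~\ref{thm:Nash} into a pointwise kernel bound on $(\cP^t)_{i,j}$ by testing against indicators and absorbing the $\pi_k$ weights, which are two-sidedly bounded precisely because $\omega_k\in[\bar\omega,\tfrac12]$. There is no genuine analytic difficulty beyond the one-dimensional Fourier estimate, which is classical; the uniform ellipticity is used exactly once, to keep $A$ (and hence $B$, $D$) independent of the site.
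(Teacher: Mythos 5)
Your proposal is correct and follows essentially the same route as the paper: exploit that the Dirichlet form $\cE(u,v)=\langle\nabla u,\nabla v\rangle$ is independent of $\omega$, transfer the Nash inequality~(A) from a reference pair to $(\cP^t,\pi)$ via the two-sided bounds $2\le\pi_k\le\bar\omega^{-1}$, and convert the resulting $L^1(\pi)\to L^\infty(\pi)$ operator bound to a kernel bound by testing against indicators. The only (inessential) difference is that you establish the reference Nash inequality from scratch via Fourier analysis, whereas the paper obtains it by applying the implication (B)$\Rightarrow$(A) to the homogeneous semigroup $\bar\cP^t$ with $\bar\omega_k\equiv\bar\omega$, for which the heat kernel bound (B) is classical; also, note that $\|\cP^t 1_{\{j\}}\|_{L^\infty(\pi)}=\sup_i(\cP^t)_{i,j}$ carries no $\pi_i$ weight, but this slip does not affect your conclusion since $\pi$ is two-sidedly bounded.
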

\begin{proof}
Let $\bar\omega = \inf_{k\in\bZ}\omega_k>0$. The semigroup $\bar\cP^t$ corresponding to the homogeneous environment $\bar\omega_k = \bar\omega$, $k\in\bZ$, together with the uniform measure $\bar\pi_k = \bar\omega^{-1}$, satisfies~(B) of Theorem~\ref{thm:Nash}. Therefore, it satisfies~(A) of the same theorem. By uniform ellipticity, the measures $\bar\pi$ and $\pi$ are equivalent. This means that~(A) holds also for the pair~$(\cP^t,\pi)$ corresponding to the environment $\omega_k$, $k\in\bZ$, and hence so does~(B). As $(\cP^t)_{i,j} = \omega_i \average{1_{\{i\}},\cP^t 1_{\{j\}}}_\pi \leq \omega_i \|1_{\{i\}}\|_{L^1(\pi)}\| \cP^t 1_{\{j\}} \|_{L^\infty(\pi)}\leq \omega_i \|1_{\{i\}}\|_{L^1(\pi)} \|1_{\{j\}}\|_{L^1(\pi)} B\,t^{-1/2}$, the proof is complete.
\end{proof}


\section{Proof of Theorem~\ref{thm:LLT}}\label{sec:LLT_proof}

Suppose $x<y$ and notice that
\beqn
\begin{split}
\bP(\sqrt t\,x<Y_t\leq \sqrt t\, y) 
& = \sum_{k = \round{\sqrt t \,x} + 1}^{\round{\sqrt t\,y}} (\cP^t)_{0,k}
= \int_{\round{\sqrt t \,x} + 1}^{\round{\sqrt t\,y}}(\cP^t)_{0,\round{\xi}}\,\rd\xi
\\
& = \int_{\sqrt t \,x}^{\sqrt t\,y}(\cP^t)_{0,\round{\xi}}\,\rd\xi + E(t,x,y) = \int_{x}^{y} \frac{\omega_0}{\omega_{\round{\sqrt t\,\xi}}}\,f_t(\xi)\,\rd\xi + E(t,x,y),
\end{split}
\eeqn
where $|E(t,x,y)| \leq  (\cP^t)_{0,\round{\sqrt t\,y}} + (\cP^t)_{0,\round{\sqrt t\,x}} \leq D'\,t^{-1/2}$ for some constant $D'>0$ by Assumption~(A3). We thus obtain from Assumption~(A1) that
\beq\label{eq:CLT1}
\lim_{t\to\infty} \int_{x}^{y} \frac{\omega_0}{\omega_{\round{\sqrt t\,\xi}}} f_t(\xi)\,\rd\xi  = \int_x^y \phi_{\sigma^2}(\xi)\,\rd\xi.
\eeq

Fix any $\ve>0$. Then, by Lemma~\ref{lem:heat_kernel} (valid by (A3)),
\beq\label{eq:osc}
\max_{[x,y]}  f_t - \min_{[x,y]}  f_t \leq C\ve,
\eeq
provided that $|x-y|\leq\ve^2$ and $t\geq \ve^{-4}$. By~\eqref{eq:average} and~\eqref{eq:CLT1}, there also exists $T=T(\ve,x,y)>0$ such that both
\beq\label{eq:average2}
\left|\frac{1}{y-x} \int_{x}^{y} \frac{1}{\omega_{\round{\sqrt t\,\xi}}}\,\rd\xi - \mu\right| \leq \ve
\eeq
and
\beq\label{eq:CLT2}
\frac{1}{y-x}\left| \int_x^y \frac{\omega_0}{\omega_{\round{\sqrt t\,\xi}}} f_t(\xi)\,\rd\xi  - \int_x^y \phi_{\sigma^2}(\xi)\,\rd\xi \right| \leq \ve
\eeq
hold, provided that $t\geq T$. The bound in~\eqref{eq:osc} yields
\beqn
\left|\int_{x}^{y} \frac{1}{\omega_{\round{\sqrt t\,\xi}}}\,f_t(\xi)\,\rd\xi - \int_{x}^{y} \frac{1}{\omega_{\round{\sqrt t\,\xi}}}\,\rd\xi\cdot f_t(x)\right| \leq C\ve\int_{x}^{y} \frac{1}{\omega_{\round{\sqrt t\,\xi}}}\,\rd\xi,
\eeqn
which in combination with~\eqref{eq:average2} results in
\beqn
\begin{split}
& \left|\int_{x}^{y} \frac{1}{\omega_{\round{\sqrt t\,\xi}}}\,f_t(\xi)\,\rd\xi - \mu(y-x)f_t(x)\right|
\\
&\qquad\qquad \leq 
C\ve\int_{x}^{y} \frac{1}{\omega_{\round{\sqrt t\,\xi}}}\,\rd\xi
+
\left| \int_{x}^{y} \frac{1}{\omega_{\round{\sqrt t\,\xi}}}\,\rd\xi  - \mu(y-x)\right| f_t(x) 
\\
&\qquad\qquad \leq 
C\ve(y-x)\mu
+
\left|\frac{1}{y-x} \int_{x}^{y} \frac{1}{\omega_{\round{\sqrt t\,\xi}}}\,\rd\xi - \mu\right|  (y-x)\left(f_t(x) +C\ve\right)
\\
&\qquad\qquad \leq 
C\ve(y-x)\mu
+
\ve  (y-x)\left(f_t(x) +C\ve\right).
\end{split} 
\eeqn
As $f_t(x)=\sqrt t\,(\cP^t)_{0,\round{\sqrt t\,x}} \omega_{\round{\sqrt t\,x}}/\omega_0$ is uniformly bounded by~(A3), and because~\eqref{eq:CLT2} holds, we conclude that there exists a uniform constant $\bar C>0$ such that, for an arbitrary $\ve>0$,
\beqn
\begin{split}
& \left|\frac{1}{y-x} \int_{x}^{y} \,\phi_{\sigma^2}(\xi)\,\rd\xi - \omega_0 \mu\, f_t(x)\right|
\leq 
\ve + C\ve\mu
+
\ve \left(f_t(x) +C\ve\right) \leq \bar C\ve,
\end{split} 
\eeqn
provided that $y-x\leq \ve^2$ and that $t$ is sufficiently large. It now follows that
\beq\label{eq:limit}
\lim_{t\to\infty} f_t(x) = \frac{1}{\omega_0\mu}\,\phi_{\sigma^2}(x) \equiv f(x),
\eeq
for all $x\in\bR$, because $\phi_{\sigma^2}$ is smooth.

Next, we show that the convergence in~\eqref{eq:limit} is uniform on compact subsets of $\bR$. For that, Lemma~\ref{lem:heat_kernel} is instrumental. Thus, suppose $I\subset\bR$ is an arbitrary closed interval and that $\ve>0$ is given. Since $I$ is compact, we can find a finite subset $\{\xi_i\}_{i=1}^N\subset I$ and a number $t_0>0$ such that, for every $x\in I$, there exists $i=i(x)$ with the property that
\beqn
|f_t(x)-f_t(\xi_i)|\leq \ve/3\quad\text{and}\quad |f(x)-f(\xi_i)|\leq \ve/3
\eeqn
for all $t\geq t_0$. Here we used the equicontinuity guaranteed by Lemma~\ref{lem:heat_kernel}. On the other hand, since $N$ is finite, it follows from~\eqref{eq:limit} that there exists a positive number $t_1>0$,
\beqn
\max_{1\leq i\leq N} |f_t(\xi_i)-f(\xi_i)|\leq \ve/3
\eeqn
for all $t\geq t_1$. Hence,
\beqn
\sup_{x\in I} |f_t(x)-f(x)| \leq \ve, \quad t\geq \max(t_0,t_1).
\eeqn
Since $I$ and $\ve$ were arbitrary, $f_t$ indeed converges to $f$ uniformly on compact sets. \qed 


\section{Discrete time}\label{sec:discrete}
In this section we will use repeatedly the fact that the operator $P$ is self-adjoint in~$L^2(\pi)$.

\medskip
Define
\beqn
b(n,\cdot) = \frac{a(n+1,\cdot)+a(n,\cdot)}{2} \quad\text{with}\quad b(0,\cdot) = \frac{a(1,\cdot)+a(0,\cdot)}{2}.
\eeqn
The following is a discrete time analogue of Lemma~\ref{lem:apriori}.

\begin{lem}\label{lem:apriori_discrete}
The maps $n\mapsto \|\nabla a(n,\cdot)\|_{L^2}^2$ and $n\mapsto \|\nabla b(n,\cdot)\|_{L^2}^2$ are non-increasing. Moreover, for any $n>0$,
\beqn
\begin{split}
\sum_{m=n}^\infty \|\nabla a(m,\cdot)\|_{L^2}^2\leq  
\pi_0\sup_{N\ge n}\sum_{m=n}^N (a(2m+2,0)-a(2m+1,0))
+ \pi_0\, a(2n,0)
\end{split}
\eeqn
and
\beqn
\sum_{m=n}^\infty \|\nabla b(m,\cdot)\|_{L^2}^2 
\le  \pi_0\, a(2n,0).
\eeqn
\end{lem}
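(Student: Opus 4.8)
The plan is to mimic the continuous-time argument of Lemma~\ref{lem:apriori}, replacing the time derivative by a one-step difference and exploiting that $P$ is self-adjoint on $L^2(\pi)$. Recall $a(n,\cdot)=P^n1_{\{0\}}$ and $a(n+1,\cdot)=Pa(n,\cdot)$, whence $b(n,\cdot)=\tfrac12(I+P)a(n,\cdot)=P^nb(0,\cdot)$ and $b(n+1,\cdot)=Pb(n,\cdot)$ for all $n\ge0$ (the boundary case $n=0$ being the stated convention). Since $a(m,\cdot)$ and $b(m,\cdot)$ are finitely supported, the identity $\cE(u,v)=\average{u,(I-P)v}_\pi$ recorded above applies without any convergence concern. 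The first step is to compute both energies exactly by folding the powers of $P$ across $I-P$ using self-adjointness:
\beqn
\|\nabla a(m,\cdot)\|_{L^2}^2=\cE(a(m,\cdot),a(m,\cdot))=\average{1_{\{0\}},(I-P)P^{2m}1_{\{0\}}}_\pi=\pi_0\bigl(a(2m,0)-a(2m+1,0)\bigr),
\eeqn
and, using $(I+P)(I-P)(I+P)=I+P-P^2-P^3$,
\beqn
\|\nabla b(m,\cdot)\|_{L^2}^2=\tfrac14\average{1_{\{0\}},(I+P-P^2-P^3)P^{2m}1_{\{0\}}}_\pi=\tfrac{\pi_0}{4}\bigl(a(2m,0)+a(2m+1,0)-a(2m+2,0)-a(2m+3,0)\bigr).
\eeqn
In particular both are non-negative, so in passing $a(2m,0)\ge a(2m+1,0)$.

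\emph{Monotonicity.} For $u\in\{a,b\}$ one has $\|\nabla u(n+1,\cdot)\|_{L^2}^2=\average{u(n,\cdot),(I-P)P^2u(n,\cdot)}_\pi$ by the same folding, so, since $I-P$ and $I+P$ commute and $I-P$ is self-adjoint,
\beqn
\|\nabla u(n,\cdot)\|_{L^2}^2-\|\nabla u(n+1,\cdot)\|_{L^2}^2=\average{u(n,\cdot),(I-P)(I-P^2)u(n,\cdot)}_\pi=\average{w,(I+P)w}_\pi,\qquad w:=(I-P)u(n,\cdot).
\eeqn
Because $P$ is a self-adjoint contraction on $L^2(\pi)$, $\average{w,Pw}_\pi\ge-\|w\|_{L^2(\pi)}\,\|Pw\|_{L^2(\pi)}\ge-\|w\|_{L^2(\pi)}^2$ by Cauchy--Schwarz, so the right-hand side is $\ge0$. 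Hence both $n\mapsto\|\nabla a(n,\cdot)\|_{L^2}^2$ and $n\mapsto\|\nabla b(n,\cdot)\|_{L^2}^2$ are non-increasing.

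\emph{The summed bounds.} For $a$, summing the exact energy and splitting each term as $a(2m,0)-a(2m+1,0)=\bigl(a(2m,0)-a(2m+2,0)\bigr)+\bigl(a(2m+2,0)-a(2m+1,0)\bigr)$ gives
\beqn
\sum_{m=n}^N\|\nabla a(m,\cdot)\|_{L^2}^2=\pi_0\bigl(a(2n,0)-a(2N+2,0)\bigr)+\pi_0\sum_{m=n}^N\bigl(a(2m+2,0)-a(2m+1,0)\bigr),
\eeqn
the first group having telescoped. Now $a(2N+2,0)=(P^{2N+2})_{0,0}\ge0$ and the last sum is at most $\sup_{N\ge n}\sum_{m=n}^N(a(2m+2,0)-a(2m+1,0))$; letting $N\to\infty$ (the left side increases) yields the first assertion. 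For $b$, the summand telescopes completely with $d_m:=a(2m,0)+a(2m+1,0)$: $\sum_{m=n}^N\|\nabla b(m,\cdot)\|_{L^2}^2=\tfrac{\pi_0}{4}(d_n-d_{N+1})\le\tfrac{\pi_0}{4}\bigl(a(2n,0)+a(2n+1,0)\bigr)$. Finally $a(2n+1,0)=\pi_0^{-1}\average{P^n1_{\{0\}},P^{n+1}1_{\{0\}}}_\pi\le\pi_0^{-1}\|P^n1_{\{0\}}\|_{L^2(\pi)}^2=a(2n,0)$, so the sum is at most $\tfrac{\pi_0}{2}a(2n,0)\le\pi_0\,a(2n,0)$; letting $N\to\infty$ gives the second assertion.

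\emph{Where the work is.} There is no deep obstacle; the argument is bookkeeping around two inputs: the self-adjointness of $P$ in $L^2(\pi)$ (used to collapse each time-$m$ energy into a linear combination of return probabilities $a(\cdot,0)$, and to obtain $I+P\ge0$ for the monotonicity) and the identity $\cE(u,v)=\average{u,(I-P)v}_\pi$. The only point needing a little care is isolating the (A4)-type quantity $\sum_m(a(2m+2,0)-a(2m+1,0))$ in the bound for $a$: this forces the even/odd split above, rather than a naive attempt to telescope the alternating series $\sum_m(a(2m,0)-a(2m+1,0))$, which does not collapse on its own.
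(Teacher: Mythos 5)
Your proof is correct and is essentially the same argument as the paper's: both hinge on the self-adjointness of $P$ in $L^2(\pi)$, the identity $\|\nabla u\|_{L^2}^2=\average{u,(I-P)u}_\pi$, the contractivity of $P$, and suitably arranged telescoping sums. Your version carries out the bookkeeping slightly more transparently by first collapsing each energy to an explicit linear combination of return probabilities $a(\cdot,0)$ (the paper's split $(P-I)^2=2(P^2-P)+(I-P^2)$ is precisely your decomposition $a(2m,0)-a(2m+1,0)=\bigl(a(2m,0)-a(2m+2,0)\bigr)+\bigl(a(2m+2,0)-a(2m+1,0)\bigr)$), and your unified monotonicity argument via $\|\nabla u(n,\cdot)\|_{L^2}^2-\|\nabla u(n+1,\cdot)\|_{L^2}^2=\average{w,(I+P)w}_\pi\geq 0$ with $w=(I-P)u(n,\cdot)$ is cleaner than the paper's separate treatments of $a$ and $b$; but the substance is the same.
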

\begin{proof}
Note the identity
\beqn
2\average{u,(P-I)u}_\pi = - \average{u,(P-I)^2u}_\pi - \average{u,(I-P^2)u}_\pi, \quad u\in L^2(\pi).
\eeqn
Here
\beq\label{eq:grad_Lap}
\average{u,(P-I)u}_\pi = \average{u,\omega\Delta u}_\pi = \average{u,\Delta u} = -\average{\nabla u,\nabla u},
\eeq
so that
\beq\label{eq:grad}
2\average{\nabla u,\nabla u} = \average{u,(P-I)^2u}_\pi + \average{u,(I-P^2)u}_\pi.
\eeq
From~\eqref{eq:grad} and $a(n+1,\cdot) = P a(n,\cdot)$,
\beqn
\begin{split}
& 2\sum_{m=n}^\infty \|\nabla b(m,\cdot)\|_{L^2}^2
\\
& \qquad = \sum_{m=n}^\infty\average{b(m,\cdot),(P-I)^2b(m,\cdot)}_\pi + \sum_{m=n}^\infty\left(\|b(m,\cdot)\|_{L^2(\pi)}^2-\|b(m+1,\cdot)\|_{L^2(\pi)}^2\right).
\end{split}
\eeqn
The second sum on the right is also telescoping, so that
\beq\label{eq:grad_b_pre}
2\sum_{m=n}^\infty \|\nabla b(m,\cdot)\|_{L^2}^2 \le \sum_{m=n}^\infty\average{b(m,\cdot),(P-I)^2b(m,\cdot)}_\pi + \|b(n,\cdot)\|_{L^2(\pi)}^2.
\eeq
Note that 
\beq\label{eq:partial_b}
(P-I) b(n,\cdot) = \frac{a(n+2,\cdot)-a(n,\cdot)}{2} = \frac12 (P^2-I)a(n,\cdot).
\eeq
As is easily checked, $P$ is a contraction in $L^1(\pi)$ and $L^\infty(\pi)$. It follows from the Riesz--Thorin interpolation theorem that $P$ is a contraction in $L^2(\pi)$ as well.  In particular, $\average{a(n,\cdot),P^4 a(n,\cdot)}_\pi \le \average{a(n,\cdot),P^2 a(n,\cdot)}_\pi$, which together with \eqref{eq:partial_b} implies
\beqn
\begin{split}
4\average{b(n,\cdot),(P-I)^2b(n,\cdot)}_\pi 
&
= \average{(P^2-I)a(n,\cdot),(P^2-I)a(n,\cdot)}_\pi
\\
&
=  \average{a(n,\cdot),P^4 a(n,\cdot)}_\pi - 2\average{a(n,\cdot),P^2 a(n,\cdot)}_\pi + \average{a(n,\cdot),a(n,\cdot)}_\pi
\\
&
\le \average{a(n,\cdot),a(n,\cdot)}_\pi - \average{a(n,\cdot),P^2 a(n,\cdot)}_\pi
\\
&
= \average{a(n,\cdot),a(n,\cdot)}_\pi - \average{a(n+1,\cdot),a(n+1,\cdot)}_\pi.
\end{split}
\eeqn
In the last line we used the self-adjointness of $P$ and that $a(n+1,\cdot) = Pa(n,\cdot)$.
The final bound obtained above is again of telescoping form, which in \eqref{eq:grad_b_pre} yields
\beqn
\begin{split}
\sum_{m=n}^\infty \|\nabla b(m,\cdot)\|_{L^2}^2 
& \le  \frac18 \|a(n,\cdot)\|_{L^2(\pi)}^2 + \frac12\|b(n,\cdot)\|_{L^2(\pi)}^2 
\le  \|a(n,\cdot)\|_{L^2(\pi)}^2 =  \pi_0\, a(2n,0),
\end{split}
\eeqn
because, by the contractivity of $P$ in $L^2(\pi)$,
\beqn
\begin{split}
\|b(n,\cdot)\|_{L^2(\pi)}^2 & = \frac14\|Pa(n,\cdot)+a(n,\cdot)\|_{L^2(\pi)}^2 \le \frac14\! \left(\|Pa(n,\cdot)\|_{L^2(\pi)}+\|a(n,\cdot)\|_{L^2(\pi)}\right)^2
\\
& \le \frac14\! \left(\|a(n,\cdot)\|_{L^2(\pi)}+\|a(n,\cdot)\|_{L^2(\pi)}\right)^2 = \|a(n,\cdot)\|_{L^2(\pi)}^2,
\end{split}
\eeqn
and because $\|a(n,\cdot)\|_{L^2(\pi)}^2 = \average{1_{\{0\}},a(2n,\cdot)}_\pi$.

On the other hand, from~\eqref{eq:grad} and $(P-I)^2=2(P^2-P)+(I-P^2)$ we get
\beqn
\begin{split}
\sum_{m=n}^N \|\nabla a(m,\cdot)\|_{L^2}^2 & = \sum_{m=n}^N \average{a(m,\cdot),(P^2-P)a(m,\cdot)}_\pi +  \sum_{m=n}^N\average{a(m,\cdot),(I-P^2)a(m,\cdot)}_\pi,
\end{split}
\eeqn
where the second sum on the right side is telescoping and bounded by $\|a(n,\cdot)\|_{L^2(\pi)}^2$. Taking the supremum over $N\ge n$, we see --- equivalently to the claimed bound --- that 
\beqn
\begin{split}
\sum_{m=n}^\infty \|\nabla a(m,\cdot)\|_{L^2}^2\leq  
\sup_{N\ge n}\sum_{m=n}^N \average{a(m,\cdot),(P^2-P)a(m,\cdot)}_\pi
+ \|a(n,\cdot)\|_{L^2(\pi)}^2.
\end{split}
\eeqn

Regarding monotonicity, by~\eqref{eq:grad_Lap}, 
$
\average{\nabla b(n,\cdot),\nabla b(n,\cdot)} = -\average{b(n,\cdot),(P-I) b(n,\cdot)}_\pi.
$
After an easy computation combining \eqref{eq:partial_b} with the fact that $a(n+1,\cdot)=Pa(n,\cdot)$, this yields
\beqn
\|\nabla b(n+1,\cdot)\|_{L^2}^2 - \|\nabla b(n,\cdot)\|_{L^2}^2 = -\frac14 \|(P^2-I)a(n,\cdot)\|_{L^2(\pi)}^2 \le 0,
\eeqn
indeed. Another easy computation gives
\beqn
\|\nabla a(n+1,\cdot)\|_{L^2}^2 - \|\nabla a(n,\cdot)\|_{L^2}^2 = -\average{(P-I)a(n,\cdot),(P+I)(P-I)a(n,\cdot)}_\pi.
\eeqn
Note that the eigenvalues of $P+I$ are non-negative, because $P$ is a self-adjoint contraction. Hence, the positive square root $Q = (P+I)^{1/2}$ is defined, and
\beqn
\|\nabla a(n+1,\cdot)\|_{L^2}^2 - \|\nabla a(n,\cdot)\|_{L^2}^2 = -\|Q(P-I)a(n,\cdot)\|_{L^2(\pi)}^2 \le 0,
\eeqn
as well.
\end{proof}

\begin{proof}[Proof of Theorem~\ref{thm:LLT_discrete}]Lemma~\ref{lem:apriori_discrete} can be used to derive the discrete-time analogue of Lemma~\ref{lem:heat_kernel}, which was the key ingredient in the proof of the local limit theorem. Indeed, note that under Assumption~(A3') we have
\beqn
\frac n2 \|\nabla b(n,\cdot)\|_{L^2}^2 \le \sum_{m=\lfloor n/2\rfloor}^\infty  \|\nabla b(m,\cdot)\|_{L^2}^2 \le C n^{-1/2},\quad n\geq 1.
\eeqn
Similarly, under Assumption~(A4),
\beqn
\|\nabla a(n,\cdot)\|_{L^2}^2 \le Cn^{-3/2},\quad n\geq 1.
\eeqn
Clearly the statement of Lemma~\ref{lem:heat_kernel} continues to hold modulo notational differences in discrete time, for $f_n(x)=\sqrt n\, b(n,\round{\sqrt n\, x} )$ and $f_n(x)=\sqrt n\, a(n,\round{\sqrt n\, x} )$, respectively. With minuscule modifications, which we leave to the reader, the proof of Theorem~\ref{thm:LLT} in Section~\ref{sec:LLT_proof} then becomes the proof of Theorem~\ref{thm:LLT_discrete}.
\end{proof}

\medskip
We finish the section with an analogue of Lemma~\ref{lem:heat}. To that end, we introduce the Dirichlet form
\beq\label{eq:Dirichlet_discrete}
\cE_2(u,v) = \average{u,(I-P^2) v}_\pi.
\eeq

\begin{thm}[Nash~\cite{Nash_1958}; Carlen, Kusuoka and Stroock~\cite{CarlenKusuokaStroock_1987}]\label{thm:Nash_discrete}
The following conditions are equivalent for the pair $(P,\pi)$:

\medskip
\noindent (A') There exists a constant $A>0$ such that the Nash inequality
\beqn
\|u\|^6_{L^2(\pi)} \leq A\,\cE_2(u,u) \|u\|^4_{L^1(\pi)}\quad\text{when}\quad  \cE_2(u,u)\le \|u\|_{L^1(\pi)}^2,
\eeqn
holds.

\medskip
\noindent (B') There exists a constant $B>0$ such that the heat kernel bound
\beqn
\|P^n\|_{\cL(L^1(\pi),L^\infty(\pi))} \leq B\,n^{-1/2}, \quad n\geq 1,
\eeqn
holds. (Here the norm is the one of linear operators from $L^1(\pi)$ to $L^\infty(\pi)$).
\end{thm}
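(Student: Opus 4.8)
The plan is to transcribe the classical Nash iteration of~\cite{Nash_1958,CarlenKusuokaStroock_1987} into discrete time, with the one-step difference of $\|P^{n}u\|_{L^2(\pi)}^2$ playing the role of the time derivative used in the proof of Theorem~\ref{thm:Nash}. The starting point is the identity $\|P^{n+1}u\|_{L^2(\pi)}^2-\|P^{n}u\|_{L^2(\pi)}^2=-\cE_2(P^{n}u,P^{n}u)$, valid because $P$ is self-adjoint in $L^2(\pi)$, which is the exact analogue of $\partial_t\|\cP^{t}u\|_{L^2(\pi)}^2=-2\cE(\cP^{t}u,\cP^{t}u)$. Two further consequences of self-adjointness will be used: (i) the sequence $m\mapsto\cE_2(P^{m}u,P^{m}u)$ is non-increasing, since $\cE_2(u,u)-\cE_2(Pu,Pu)=\average{u,(I-P^{2})^{2}u}_\pi\ge 0$; and (ii) $\|P^{2n}\|_{\cL(L^1(\pi),L^\infty(\pi))}=\|P^{n}\|_{\cL(L^1(\pi),L^2(\pi))}^2$, together with the monotonicity of $n\mapsto\|P^{n}\|_{\cL(L^1(\pi),L^\infty(\pi))}$ --- both standard facts obtained from duality and the contractivity of $P$ on $L^1(\pi)$ and on $L^\infty(\pi)$. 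Fact (ii) lets one move freely between (B') and the decay $\|P^{n}\|_{\cL(L^1(\pi),L^2(\pi))}\le C\,n^{-1/4}$.

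For the implication (A')$\Rightarrow$(B'): the operator norm $\|P^{n}\|_{\cL(L^1(\pi),L^2(\pi))}$ equals $\sup_{j}\|P^{n}1_{\{j\}}\|_{L^2(\pi)}/\pi_{j}$, so it is enough to bound $\phi(n):=\|P^{n}u\|_{L^2(\pi)}^2$ for nonnegative $u$ with $\|u\|_{L^1(\pi)}=1$; for such $u$ one has $\|P^{n}u\|_{L^1(\pi)}=1$ for all $n$ since $\pi$ is $P$-invariant. Using the elementary bound $\|v\|_{L^2(\pi)}^2\le\|v\|_{L^\infty(\pi)}\|v\|_{L^1(\pi)}\le\tfrac12\|v\|_{L^1(\pi)}^2$ --- valid here because $\pi_k=1/\omega_k\ge 2$ --- one gets $\cE_2(P^{n}u,P^{n}u)\le\|P^{n}u\|_{L^2(\pi)}^2\le\tfrac12<\|P^{n}u\|_{L^1(\pi)}^2$, so the side condition in (A') holds at \emph{every} $n$, and the Nash inequality yields $\phi(n)^{3}\le A\,\cE_2(P^{n}u,P^{n}u)=A(\phi(n)-\phi(n+1))$. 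The recursion $\phi(n)-\phi(n+1)\ge\phi(n)^{3}/A$ for a nonnegative non-increasing sequence forces $\phi(n)^{-2}\ge n/A$, hence $\|P^{n}u\|_{L^2(\pi)}^2=\phi(n)\le\sqrt{A/n}$; taking the supremum over $u$ and invoking (ii) gives (B').

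For (B')$\Rightarrow$(A'), fact (ii) turns (B') into $\phi_{u}(n):=\|P^{n}u\|_{L^2(\pi)}^2\le C_0\,(n+1)^{-1/2}\|u\|_{L^1(\pi)}^2$ for all $n\ge 0$ (the case $n=0$ again using $\phi_{u}(0)\le\tfrac12\|u\|_{L^1(\pi)}^2$). Given $u$ with $\cE_2(u,u)\le\|u\|_{L^1(\pi)}^2$, normalized to $\|u\|_{L^1(\pi)}=1$, the telescoping identity together with $\cE_2(P^{m}u,P^{m}u)\le\cE_2(u,u)$ from (i) gives $\|u\|_{L^2(\pi)}^2=\phi_{u}(0)\le n\,\cE_2(u,u)+\phi_{u}(n)\le n\,\cE_2(u,u)+C_0\,n^{-1/2}$ for every $n\ge 1$. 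Minimizing the right-hand side --- taking $n\asymp\cE_2(u,u)^{-2/3}$ when $\cE_2(u,u)$ is small and $n=1$ otherwise, the hypothesis $\cE_2(u,u)\le 1$ being what keeps the $n=1$ estimate of the right homogeneity --- yields $\|u\|_{L^2(\pi)}^{6}\le A'\,\cE_2(u,u)=A'\,\cE_2(u,u)\|u\|_{L^1(\pi)}^4$, i.e.\ (A').

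The only genuine departure from the continuous-time argument, and the step I expect to need the most care, is the side condition $\cE_2(u,u)\le\|u\|_{L^1(\pi)}^2$ in (A'): the unconstrained Nash inequality is false in general (e.g.\ $u=1_{\{0\}}$ at a site with $\omega_0$ small violates it even when (B') holds), so the bookkeeping around that constraint must be done honestly. What makes everything fit is precisely the inequality $\pi_k\ge 2$ forced by $\omega_k\le\tfrac12$: it guarantees both that the constraint is automatically met along the orbit $(P^{n}u)_{n\ge 0}$ in the forward direction and that the $n=0$ endpoint behaves correctly in the converse. One could alternatively try to read the theorem off Theorem~\ref{thm:Nash} applied to the continuous-time semigroup $e^{-t(I-P^{2})}$, whose Dirichlet form is exactly $\cE_2$; but reconciling the unconstrained Nash inequality produced that way with the constrained one needs the same observation, so the direct route above seems cleaner.
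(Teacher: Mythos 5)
Your proof is correct, but it is a genuinely different route from the paper's: the paper does not prove Theorem~\ref{thm:Nash_discrete} at all --- it invokes Theorem~4.1 of Carlen--Kusuoka--Stroock verbatim, only checking the standing hypothesis $\|P\|_{\cL(L^1(\pi),L^\infty(\pi))}<\infty$ via $\|Pu\|_{L^\infty(\pi)}\le\tfrac12\|u\|_{L^1(\pi)}$. Your (B')$\Rightarrow$(A') half essentially reproduces the argument the paper does spell out later, namely the strengthened ``Claim'' inside the proof of Lemma~\ref{lem:heat_discrete} (telescoping $\|P^nu\|_{L^2(\pi)}^2$, monotonicity of $\cE_2$ along the orbit from $\average{u,(I-P^2)^2u}_\pi\ge0$, then optimizing $n\,\cE_2(u,u)+C_0n^{-1/2}$ over integers $n\ge1$, the constraint being what makes the optimizer $\ge1$). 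Your (A')$\Rightarrow$(B') half --- verifying the side condition along the orbit via $\cE_2(v,v)\le\|v\|_{L^2(\pi)}^2\le\tfrac12\|v\|_{L^1(\pi)}^2$, running the discrete Nash recursion $\phi(n)-\phi(n+1)\ge\phi(n)^3/A$ to get $\phi(n)\le\sqrt{A/n}$, and passing from $L^1\to L^2$ decay to $L^1\to L^\infty$ decay by self-adjoint duality and monotonicity of $n\mapsto\|P^n\|_{\cL(L^1(\pi),L^\infty(\pi))}$ --- is not in the paper at all, so what your approach buys is a self-contained proof of the equivalence rather than a citation, at the cost of redoing CKS's iteration by hand.

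One side remark of yours is wrong, though it is not load-bearing: $u=1_{\{0\}}$ does \emph{not} violate the unconstrained Nash inequality. For that $u$ one has $\|u\|_{L^2(\pi)}^2=\pi_0$, $\|u\|_{L^1(\pi)}=\pi_0$ and $\cE_2(u,u)=\pi_0\bigl(1-(P^2)_{0,0}\bigr)\ge\pi_0\omega_0=1$, so the unconstrained inequality holds for it with $A=\tfrac12$; small $\omega_0$ makes the $L^1$ norm large and only helps. In fact, in the setting of this theorem the constraint in (A') is vacuous: since $\pi_k\ge2$, every $u$ satisfies $\cE_2(u,u)\le\|u\|_{L^2(\pi)}^2\le\tfrac12\|u\|_{L^1(\pi)}^2$, so the constrained and unconstrained inequalities coincide for this pair $(P,\pi)$ (the constraint matters only in the general CKS framework, where $\pi$ need not be bounded below). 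Your bookkeeping around the constraint is correct as written --- the observation $\pi_k\ge2$ is exactly what you use --- but the motivation you give for why it is needed, and the claimed counterexample, should be dropped or corrected.
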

This is Theorem~4.1 from~\cite{CarlenKusuokaStroock_1987}. Just note that $0\le P_{i,j}<1 = \omega_j\pi_j$ implies
\beqn
\|Pu\|_{L^\infty(\pi)} \le \sup_i \sum_j P_{i,j}|u(j)| \le \sum_j \omega_j \pi_j|u(j)| \le \frac12\|u\|_{L^1(\pi)}
\eeqn
so that the assumption $\|P\|_{\cL(L^1(\pi),L^\infty(\pi))} < \infty$ in~\cite{CarlenKusuokaStroock_1987} is satisfied. 

\medskip 
We now obtain
\begin{lem}\label{lem:heat_discrete}
If $\inf_{k\in\bZ}\omega_k>0$, there exists a constant $D>0$ such that
\beqn
(P^n)_{i,j} \leq D\,n^{-1/2}, \quad (i,j)\in\bZ^2,\,n\ge 1.
\eeqn
In particular, Assumption (A3') is satisfied.
\end{lem}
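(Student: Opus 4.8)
The plan is to mimic exactly the proof of Lemma~\ref{lem:heat}, substituting Theorem~\ref{thm:Nash_discrete} for Theorem~\ref{thm:Nash} and the discrete-time semigroup $P^n$ for $\cP^t$. First I would introduce the comparison environment $\bar\omega_k \equiv \bar\omega := \inf_{k\in\bZ}\omega_k > 0$, which is strictly positive by the uniform ellipticity hypothesis. Let $\bar P$ denote the corresponding transition matrix --- that of a lazy simple random walk with holding probability $1-2\bar\omega$ --- and $\bar\pi_k = \bar\omega^{-1}$ its reversible measure. Since $\bar P$ is the transition matrix of a recurrent walk on $\bZ$ whose $n$-step return probabilities decay like $n^{-1/2}$ (a classical local central limit theorem estimate, or alternatively a direct Fourier computation), the pair $(\bar P,\bar\pi)$ satisfies condition~(B') of Theorem~\ref{thm:Nash_discrete}, hence also condition~(A').

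The crucial observation, just as in Lemma~\ref{lem:heat}, is that the Dirichlet form $\cE_2$ attached to $(P,\pi)$ does \emph{not} depend on the diffusivities $\omega_k$: writing $\cE_2(u,u) = \average{u,(I-P^2)u}_\pi$ and expanding $(I-P^2) = (I-P)(I+P)$, one sees using $\average{u,(P-I)u}_\pi = -\average{\nabla u,\nabla u}$ and the analogous identity for $P^2$ (the operator $P^2$ corresponds to the heat equation with twice-composed Laplacian, still independent of the $\omega_k$ on the level of the quadratic form since every factor of $\omega_k$ cancels against a factor of $\pi_k$) that $\cE_2(u,u)$ is a fixed quadratic form in $u$, the same for $\omega$ and for $\bar\omega$. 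Next, by uniform ellipticity $\bar\omega \leq \omega_k \leq \tfrac12$ the measures $\bar\pi$ and $\pi$ are equivalent with $\pi_k/\bar\pi_k = \bar\omega/\omega_k \in [2\bar\omega,1]$ uniformly bounded above and below. Consequently the $L^1(\pi)$, $L^2(\pi)$, $L^1(\bar\pi)$, $L^2(\bar\pi)$ norms are all pairwise comparable up to fixed constants, so the Nash inequality~(A') for $(\bar P,\bar\pi)$ transfers (with a possibly larger constant $A$) to~(A') for $(P,\pi)$ --- being careful that the side condition $\cE_2(u,u) \le \|u\|_{L^1(\pi)}^2$ is handled by absorbing the norm-equivalence constants into $A$. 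By the equivalence in Theorem~\ref{thm:Nash_discrete}, we then get~(B') for $(P,\pi)$: $\|P^n\|_{\cL(L^1(\pi),L^\infty(\pi))} \le B\,n^{-1/2}$ for $n\geq 1$.

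Finally I would convert the operator-norm bound into the pointwise kernel bound exactly as in Lemma~\ref{lem:heat}: $(P^n)_{i,j} = \omega_i \average{1_{\{i\}}, P^n 1_{\{j\}}}_\pi \le \omega_i \|1_{\{i\}}\|_{L^1(\pi)} \|P^n 1_{\{j\}}\|_{L^\infty(\pi)} \le \omega_i \|1_{\{i\}}\|_{L^1(\pi)} \|1_{\{j\}}\|_{L^1(\pi)} B\,n^{-1/2}$, and since $\omega_i \le \tfrac12$ and $\|1_{\{i\}}\|_{L^1(\pi)} = \pi_i = \omega_i^{-1} \le \bar\omega^{-1}$, the prefactor is bounded by a constant $D$ depending only on $\bar\omega$ and $B$. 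This gives $(P^n)_{i,j} \le D\,n^{-1/2}$ for all $(i,j)\in\bZ^2$ and $n\ge 1$, and in particular $\sup_{n\ge 0}\sup_{k}\sqrt n\,(P^n)_{0,k} < \infty$, which is Assumption~(A3'). The only genuinely new point compared to Lemma~\ref{lem:heat} --- and thus the main thing to verify carefully --- is that the comparison semigroup $\bar P$ (lazy symmetric walk) really does satisfy the $n^{-1/2}$ heat-kernel bound~(B'); this is standard but should be invoked or proved, e.g.\ via the characteristic function $\widehat{\bar P}(\theta) = 1-2\bar\omega(1-\cos\theta)$ and the bound $(\bar P^n)_{0,0} = \tfrac1{2\pi}\int_{-\pi}^\pi (1-2\bar\omega(1-\cos\theta))^n\,\rd\theta = O(n^{-1/2})$, together with $(\bar P^n)_{i,j}\le (\bar P^n)_{0,0}$ by Cauchy--Schwarz and reversibility. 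The rest is a routine transcription of the continuous-time argument.
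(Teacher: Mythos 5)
There is a genuine gap, and it is precisely the point where the discrete-time lemma departs from its continuous-time cousin. You claim that $\cE_2(u,u)=\average{u,(I-P^2)u}_\pi$ is independent of the diffusivities $\omega_k$, ``since every factor of $\omega_k$ cancels against a factor of $\pi_k$.'' That is \emph{false}, and it is exactly the subtlety the paper's proof is built around. In continuous time, $\cE(u,v)=\average{u,(I-P)v}_\pi=\average{\nabla u,\nabla v}$ really is $\omega$-free: one factor of $\pi_k=1/\omega_k$ cancels the single factor of $\omega_k$ in $(P-I)=\omega\Delta$. But $P^2$ carries \emph{two} factors of transition probabilities, while $\pi_k$ supplies only one $1/\omega_k$. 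Expanding as in the paper gives
\beqn
\cE_2(u,u) = 2\|\nabla u\|_{L^2}^2 - \average{\omega,|\Delta u|^2},
\eeqn
and the second term manifestly depends on $\omega$. Consequently your transfer of the Nash inequality from $\bar\omega$ to $\omega$ does not go through as stated.

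The residual $\omega$-dependence also makes your choice of comparison environment $\bar\omega=\inf_k\omega_k$ the wrong one. Since $\cE_2$ is \emph{decreasing} in $\omega$ (the $\omega$-term enters with a minus sign), taking $\bar\omega\le\omega_k$ gives $\bar\cE_2\ge\cE_2$, which is useless: a Nash bound $\|u\|^6\le A\,\bar\cE_2(u,u)\|u\|^4$ cannot be upgraded to one with $\cE_2$ when $\bar\cE_2$ dominates $\cE_2$. The paper instead takes the homogeneous environment $\tilde\omega_k\equiv\tfrac12$, so that $\omega_k\le\tilde\omega_k$ and hence $\tilde\cE_2\le\cE_2$ (their equation~\eqref{eq:Dirichlet_monotone}); this monotonicity goes the right way. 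Finally, you brush aside the conditional nature of~(A') by saying the side condition ``is handled by absorbing the norm-equivalence constants into $A$.'' This is not so: after replacing $\pi$ by $\tilde\pi$ and $\cE_2$ by $\tilde\cE_2$, the side condition $\cE_2(u,u)\le\|u\|_{L^1(\pi)}^2$ only yields $\tilde\cE_2(u,u)\le\tfrac14\bar\omega^{-2}\|u\|_{L^1(\tilde\pi)}^2$ with a constant $\ge 1$, which is strictly weaker than the hypothesis of~(A') for $(\tilde P,\tilde\pi)$. The paper therefore proves a separate Claim re-establishing the conditional Nash inequality under this relaxed side condition, by re-running the minimization argument of Carlen--Kusuoka--Stroock with the adjusted constant. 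Your proposal contains no analogue of this step. The remaining ingredients in your write-up --- that the lazy symmetric walk satisfies~(B') by the classical LCLT, and the conversion of the operator-norm bound~(B') into the pointwise kernel bound --- are fine, but the core transfer argument needs to be replaced by the paper's monotonicity-plus-Claim route.
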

\begin{proof}
By~\eqref{eq:Dirichlet_discrete} and~\eqref{eq:grad},
\beqn
\cE_2(u,u) = 2\|\nabla u\|_{L^2}^2 - \average{u,(P-I)^2 u}_\pi = 2\|\nabla u\|_{L^2}^2 - \average{\omega,|\Delta u|^2}.
\eeqn
The last expression shows the dependence on $\omega$ explicitly. In particular, denoting~$\tilde\cE_2$ the Dirichlet form of the simple symmetric random walk with $\tilde \omega_k = \tfrac12$ for all $k\in\bZ$,
\beq\label{eq:Dirichlet_monotone}
\tilde\cE_2(u,u) \le \cE_2(u,u).
\eeq
Now, assume 
$
\cE_2(u,u)\le \|u\|_{L^1(\pi)}^2.
$
Then $\tilde\cE_2(u,u) \le (\sup_k\tilde\omega_k/\omega_k\|u\|_{L^1(\tilde\pi)})^2$ 
or
\beq\label{eq:relaxed}
\tilde\cE_2(u,u) \le \frac14\bar\omega^{-2}\|u\|_{L^1(\tilde\pi)}^2,
\eeq
where $\bar\omega = \inf_{k\in\bZ}\omega_k>0$ and $\tilde\pi_k = \tilde\omega_k^{-1}= 2$. In the homogeneous environment $\tilde\omega$, (B') and hence (A') hold true. Note that $\frac14\bar\omega^{-2}\geq 1$, so that the Nash inequality in (A') is not automatic for the function $u$ satisfying~\eqref{eq:relaxed}. However, we will shortly prove the following improvement to Theorem~\ref{thm:Nash_discrete}:

\medskip
\noindent \textit{Claim.} Suppose (B') holds. Given a $C>0$, there exists $\tilde A>0$ such that
\beqn
\|u\|^6_{L^2(\pi)} \leq \tilde A\,\cE_2(u,u) \|u\|^4_{L^1(\pi)} \quad\text{when}\quad \cE_2(u,u)\le C\|u\|_{L^1(\pi)}^2.
\eeqn

\medskip
Thus, we have $\|u\|^6_{L^2(\tilde \pi)} \leq \tilde A\,\tilde\cE_2(u,u) \|u\|^4_{L^1(\tilde \pi)}$ for some $\tilde A$. \noindent By~\eqref{eq:Dirichlet_monotone}, we may replace~$\tilde\cE_2$ by~$\cE_2$ in this inequality. Since~$\tilde\pi$ and~$\pi$ are equivalent, we have shown that (A') holds for the original environment~$\omega$. Hence, also (B') holds, which completes the proof.

\medskip
\noindent\textit{Proof of Claim.}
We follow the proof of Theorem~4.1 in~\cite{CarlenKusuokaStroock_1987}, supplying details.
Let us denote $u_n = \|P^n u\|_{L^2(\pi)}^2$, $n\ge 0$. Since $\cE_2(P^n u,P^n u) = u_n - u_{n+1}$, we have
\beqn
u_0 = u_n + \sum_{k=0}^{n-1} \cE_2(P^k u,P^k u).
\eeqn
Note that
$
\cE_2(u,u)-\cE_2(Pu,Pu) = \average{u,(I-P^2)^2 u}_\pi \ge 0
$
and that $u_n = \average{u,P^{2n}u}_\pi$ yields $u_n\le \|P^{2n}\|_{\cL(L^1(\pi),L^\infty(\pi))} \|u\|_{L^1(\pi)}^2$. Assuming (B'), we thus get
\beqn
\|u\|_{L^2(\pi)}^2 \le (2n)^{-1/2}B\|u\|_{L^1(\pi)}^2 + n\, \cE_2(u,u), \quad n\ge 1.
\eeqn
We perform a minimization. Set $f(x) = ax^{-1/2}+bx$ for $x>0$ with $a=2^{-1/2}B\|u\|_{L^1(\pi)}^2$ and $b = \cE_2(u,u)$. The global minimum is at 
$
x^* = (a/2b)^{2/3}
$
and $f(x^*) = c\cdot a^{2/3}b^{1/3}$ where $c>0$ is independent of $a,b$. Suppose that $u$ satisfies $\cE_2(u,u)\le C\|u\|_{L^1(\pi)}^2$. Then $a/2b \ge 2^{-3/2}B/C$. We can assume without any loss of generality that $B\ge 2^{3/2}C$, so that $x^*\ge 1$. Then, there exists an integer $n^*\ge 1$ such that $n^*\in[x^*,2x^*]$. Obviously $f(n^*) \le f(2x^*) \le 2f(x^*) = 2c\cdot a^{2/3}b^{1/3}$. This shows that
\beqn
\|u\|_{L^2(\pi)}^2 \le 2c \cdot (2^{-1/2}B\|u\|_{L^1(\pi)}^2)^{2/3}(\cE_2(u,u))^{1/3},
\eeqn
\enlargethispage{5mm}%
which proves the Nash inequality.
\end{proof}


\vskip 1cm


\bibliography{Balanced_LLT}{}
\bibliographystyle{plainurl}


\vspace*{\fill}

\end{document}